\newcommand{\rla}{\rightleftarrows}
\newcommand{\uppertri}{\big( \begin{smallmatrix} R_d & * \\ & R_e \end{smallmatrix} \big)}
\newcommand{\uppertripq}{\big( \begin{smallmatrix} R_p & * \\ & R_q \end{smallmatrix} \big)}
\newcommand{\fpo}{\big( \begin{smallmatrix} F_p \\ 0\end{smallmatrix} \big)}
\title{On the Semi-Stable CoHa and its Modules Arising from Smooth Models}
\author{H. Franzen\footnote{\myaddress}}
\date{ }
\begin{document}
	\maketitle
	\begin{abstract}
		We study a variant of the semi-stable Cohomological Hall algebra which we construct using equivariant Chow groups. This algebra, we call it the semi-stable ChowHa, arises as a quotient of the CoHa. Smooth models of quiver moduli give rise to modules over the semi-stable ChowHa. We prove that these modules are cyclic and we compute a presentation using Harder--Narasimhan methods.
	\end{abstract}

	\section*{Introduction\markboth{}{Introduction}}

	The Cohomological Hall algebra---which we will call CoHa, for brevity---and its semi-stable version were invented by Kontsevich and Soibelman in \cite{KS:11}. These algebras have proven to be very useful tools in the theory of Donaldson--Thomas invariants. For example, a result of Efimov (cf.\ \cite{Efimov:12}) shows that the CoHa of a symmetric quiver is a free super-commutative algebra. This implies that the Donaldson--Thomas invariants are non-negative and integral.

	There is a class of modules over the semi-stable CoHa of a quiver arising from smooth models (a.k.a. framed stable quiver moduli) of the underlying quiver moduli space. These modules were introduced by Soibelman (cf.\ \cite[Section 4]{Soibelman:14}). This comprises an important special case: When choosing the trivial stability condition, the smooth models---which in this case are usually called non-commutative Hilbert schemes---have a cell decomposition which can be described entirely combinatorially (cf.\ \cite[Theorem 1.3]{Reineke:05} and \cite[Corollary 7.8]{ER:09}). In \cite[Section 2]{Reineke:12}, Reineke relates the generating series of the CoHa of the $m$-loop quiver with the Poincar\'{e} series of non-commutative Hilbert schemes (over the path algebra of the $m$-loop quiver, which is nothing but a free non-commutative algebra in $m$ letters). Therefore, we want to examine the interrelation of the Cohomological Hall algebra and the modules over it obtained by non-commutative Hilbert schemes, or, 
	more generally, the connection between the semi-stable CoHa and the modules coming from smooth models.

	When restricting to the case of a trivial potential, we can define an analog of the semi-stable CoHa using Edidin--Graham's equivariant Chow groups. Let's call it the semi-stable ChowHa. The ``ordinary'' ChowHa---which coincides with the semi-stable ChowHa for the trivial stability---is isomorphic to the CoHa. The semi-stable ChowHa is a quotient of the ChowHa/CoHa and under the hypothesis that the closure of every Harder--Narasimhan (HN) stratum is a union of HN strata, we are able to compute the kernel explicitly in terms of the ChowHa-multiplication (Lemma \ref{kack_lemma}). Although not applicable in most cases, we are able to use this lemma to prove Proposition \ref{tensorProd} which states that the CoHa of the quiver of type $\tilde{A}_1$ with the symmetric orientation is isomorphic to the (descending) tensor product of the semi-stable CoHa's---with respect to a fixed non-trivial stability; in this case the semi-stable CoHa's and the semi-stable ChowHa's agree. Such an isomorphism is known to exist for the CoHa of the quiver $A_2$ thanks to Kontsevich--Soibelman (cf.\ \cite[5.2]{KS:11}) or, more generally, for the CoHa of any Dynkin quiver except for type $E_8$ thanks to Rim{\'a}nyi (cf.\ \cite{Rimanyi:13}). 

	We are considering modules over the semi-stable ChowHa which are obtained by Chow groups of smooth models. There is a natural map from the semi-stable ChowHa---regarded as a left-module over itself---to the aforementioned module which is surjective and whose kernel can be described explicitly in terms of equivariant Chern classes of universal bundles (Theorem \ref{main_thm}). As the closure of a framed HN stratum which lies over the unframed semi-stable locus is a union of framed HN strata, the proof can be completed with the same methods as the proof of Lemma \ref{kack_lemma}. Theorem \ref{main_thm} can be viewed as a generalization of \cite[Theorem 3.6]{Franzen:13:NCHilb_Loop}; it describes the case of the $m$-loop quiver.

	The paper is organized as follows: In the first section, we recollect Kontsevich--Soibelman's definition and some facts about the CoHa of a quiver (with trivial potential). We do the same in Section 2 for the semi-stable CoHa (with respect to a stability condition in the sense of King \cite{King:94}). Describing the Hecke correspondences which induce the multiplication, it follows that the construction of the semi-stable CoHa may also be carried through for equivariant Chow groups yielding the semi-stable ChowHa. Viewed as a quotient of the ChowHa/CoHa, we give a presentation under some strong assumptions about the HN stratification (Lemma \ref{kack_lemma}). The third section deals with CoHa-modules. We give the definition of a smooth model and describe the Hecke correspondences which induce the module structure. Again, this construction works in both the cohomological and the intersection theoretic setup. Due to the nice geometric structure of the smooth models, the thus induced modules coincide as graded 
	abelian groups (if the quiver has no oriented cycles). Theorem \ref{main_thm} gives a close connection between the semi-stable ChowHa and the module. In Section 4, we apply this result to the special case of non-commutative Hilbert schemes.

	\begin{ack*}
		I am grateful to Markus Reineke for introducing me to the subject of Cohomological Hall algebras and for several very inspiring discussions concerning the methods and results of this work. I would also like to thank Ben Davison, Sergey Mozgovoy, Richard Rim{\'a}nyi, Yan Soibelman, and Matt Young for very helpful remarks and discussions. While doing this research, I was supported by the DFG SFB / Transregio 45 ``Perioden, Modulr\"aume und Arithmetik algebraischer Variet\"aten''.
	\end{ack*}
	
	\section{Cohomological Hall Algebra}

	A \textbf{quiver} is a finite oriented graph. We denote its set of vertexes by $Q_0$ and its set of arrows by $Q_1$. Let $Q$ be a quiver which we consider fixed, whence we often suppress it in the notations. For a dimension vector $d$, let $R_d$ be the vector space
	$$
		R_d(Q) = \bigoplus_{\alpha: i \to j} \Hom(\C^{d_i},\C^{d_j})
	$$
	on which the linear algebraic group $G_d := \prod_{i \in Q_0} \Gl_{d_i}$ acts via base change.

	\subsection*{Construction of the CoHa}

	For a dimension vector $d$, define $\HH_d$ to be the singular $G_d$-equivariant cohomology group with rational coefficients
	$$
		\HH_d(Q) = H_{G_d}^*(R_d;\Q).
	$$
	Although not always necessary, we will only use cohomology/Chow groups with rational coefficients. Therefore, most of the time, we will not indicate this in the notation. Abbreviate $\Gamma := \Z_{\geq 0}^{Q_0}$. The $\Gamma$-graded abelian group 
	$$
		\HH(Q) := \bigoplus_{d} \HH_d(Q)
	$$ 
	can be equipped with a multiplication: For dimension vectors $d$ and $e$, we consider the vector space $\uppertri$ of those $M \in R_{d+e}$ such that $M_{\alpha}(\C^{d_i}) \sub \C^{d_j}$ for all $\alpha: i \to j$; here, $\C^{d_i} \sub \C^{d_i+e_i}$ is the subspace spanned by the first $d_i$ coordinate vectors. There are maps
	$$
		R_d \times R_e \ot \uppertri \to R_{d+e},
	$$
	the right-hand map embedding $\smash{\uppertri}$ as a linear subspace into $R_{d+e}$ and the map $\smash{\uppertri} \to R_d \times R_e$ assigning to a representation $M = \left( \begin{smallmatrix} M' & * \\ & M'' \end{smallmatrix} \right)$ the pair $(M',M'')$. We have actions of the groups $L = G_d \times G_e$ on $R_d \times R_e$, of $P := \smash{\big( \begin{smallmatrix} G_d & * \\ & G_e \end{smallmatrix} \big)}$ on the space $\smash{\uppertri}$, and $G_{d+e}$ on $R_{d+e}$. With respect to these actions, $\smash{\uppertri} \to R_d \times R_e$ is an $L$-equivariant vector bundle and $\smash{\uppertri} \to R_{d+e}$ is the zero section of the $P$-equivariant vector bundle $R_{d+e} \to \uppertri$ (which forgets the south-western blocks of the matrices). We thus obtain an isomorphism and a push-forward map
	$$
		H^k_L(R_d \times R_e) \xto{}{\cong} H^k_L\uppertri \quad \text{and} \quad H^k_P\uppertri \to H^{k+s_1}_P(R_{d+e}).
	$$
	Here, $s_1 = 2\dim_{\C} R_{d+e} - 2\dim_{\C} \smash{\uppertri} = 2 \sum_{\alpha: i \to j} d_ie_j$. As $L$ is the Levi subgroup of $P$, the quotient $P/L$ is an affine space, whence the restriction map $H_P^k\smash{\uppertri} \to H_L^k\smash{\uppertri}$ is an isomorphism. Moreover, as $P$ is a parabolic subgroup of $G = G_{d+e}$, the quotient $G/P$ is projective and thus, there exists a map $H_P^k(R_{d+e}) \to H_G^{k+s_0}(R_{d+e})$. Here, $s_0$ equals $-2 \dim_{\C} G/P = -2 \sum_i d_ie_i$. Composing these maps, we obtain for integers $k,l$
	\begin{center}
		\begin{tikzpicture}[description/.style={fill=white,inner sep=2pt}]
			\matrix(m)[matrix of math nodes, row sep=1.5em, column sep=1.5em, text height=1.5ex, text depth=0.25ex]
			{
				H_{G_d}^k(R_d) \otimes H_{G_e}^l(R_e) && H_G^{k+l+s_1+s_0}(R_{d+e}) \\
				H^{k+l}_L(R_d \times R_e) & H_L^{k+l}\uppertri \cong H_P^{k+l}\uppertri & H_P^{k+l+s_1}(R_{d+e}). \\
			};
			\path[->, font=\scriptsize]
			(m-1-1) edge 	(m-1-3)
			(m-1-1) edge node[auto] {$\times$}	(m-2-1)
			(m-2-1) edge node[auto] {$\cong$}	(m-2-2)
			(m-2-2) edge 	(m-2-3)
			(m-2-3) edge	(m-1-3);
		\end{tikzpicture}
	\end{center}
	The Euler form $\chi = \chi_Q$ of $Q$ is the bilinear form on $\Z^{Q_0}$ defined by $\chi(d,e) = \sum_{i \in Q_0} d_ie_i - \sum_{\alpha:i \to j} d_ie_j$. Note that $s_1 + s_0 = -2 \sum_i d_ie_i + 2 \sum_{\alpha: i \to j} d_ie_j$ which is precisely $-2 \chi( d,e )$. We have constructed a $\Q$-linear map
	$$
		\HH_d \otimes \HH_e \to \HH_{d+e}
	$$
	which we denote $*$. Kontsevich--Soibelman show that $\HH$ thus becomes an associative, $\Gamma$-graded algebra. 

	\begin{defn*}
		The $\Gamma$-graded algebra $\HH(Q)$ is called the \textbf{Cohomological Hall algebra} of $Q$.
	\end{defn*}

	In \cite{KS:11}, the multiplication is computed explicitly: The equivariant cohomology group $H^*_{G_d}(R_d) \cong H^*_{G_d}(\pt)$ is isomorphic to
	$$
		\Q[x_{i,\nu} \mid i \in Q_0,\ 1 \leq \nu \leq d_i]^{W_d}
	$$
	with $W_d = \prod_i S_{d_i}$. The variables $x_{i,1},\ldots,x_{i,d_i}$ (living in degree $2$) may be interpreted as the Chern roots of the $G_d$-linear vector bundle $R_d \times \C^{d_i} \to R_d$ with $G_d$ acting by its $i$\textsuperscript{th} factor. Using this identification, the multiplication of the CoHa is given as follows:

	\begin{thm}[{\cite[Theorem 2]{KS:11}}] \label{Thm_KS}
		For $f \in \HH_d$ and $g \in \HH_e$, the product $f * g$ equals the function
		$$
			\sum \sigma.\left(f(\mathbf{x'})\cdot g(\mathbf{x}'') \cdot \frac{\Delta_1(\mathbf{x})}{\Delta_0(\mathbf{x})} \right).
		$$
		The above sum ranges over all $(d,e)$-shuffles $\sigma$. These are elements $\sigma = (\sigma_i \mid i) \in W_{d+e}$ such that every $\sigma_i$ is a $(d_i,e_i)$-shuffle permutation. The symbols $\mathbf{x}'$, $\mathbf{x}''$ and $\mathbf{x}$ stand for the sets of variables
		\begin{align*}
			\mathbf{x}'  &= \{ x_{i,\nu} \mid i \in Q_0,\ \nu = 1,\ldots,d_i \} \\
			\mathbf{x}'' &= \{ x_{i,\nu} \mid i \in Q_0,\ \nu = d_i+1,\ldots,d_i+e_i \} \\
			\mathbf{x}   &= \{ x_{i,\nu} \mid i \in Q_0,\ \nu = 1,\ldots,d_i+e_i \}.
		\end{align*}
		Moreover, the polynomials $\Delta_0$ and $\Delta_1$ are defined by
		\begin{align*}
			\Delta_1(\mathbf{x}) &= \prod\limits_{\alpha: i \to j}\prod\limits_{\mu=1}^{d_i}\prod\limits_{\nu=d_j+1}^{d_j + e_j} (x_{j,\nu} - x_{i,\mu}) \\
			\Delta_0(\mathbf{x}) &= \prod\limits_{i \in Q_0}\prod\limits_{\mu = 1}^{d_i} \prod\limits_{\nu=d_i+1}^{d_i+e_i} (x_{i,\nu} - x_{i,\mu} ).
		\end{align*}
	\end{thm}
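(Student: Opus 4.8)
The plan is to unwind the five maps whose composite is $*$, using throughout the identification $\HH_c = H^*_{G_c}(R_c) \cong H^*_{G_c}(\pt) = H^*(BG_c) \cong \Q[x_{i,\nu} \mid i \in Q_0,\, 1 \le \nu \le c_i]^{W_c}$, which holds because $R_c$ is an affine space (so $G_c$-equivariantly contractible) and $H^*(B\Gl_n;\Q) = \Q[x_1,\dots,x_n]^{S_n}$, the $x_\nu$ being the Chern roots of the tautological bundle. The same contractibility arguments, applied to the homotopy equivalences $\uppertri \to R_d \times R_e$, $L \hookrightarrow P$ and $R_{d+e} \to \uppertri$, will identify $H^*_L(R_d \times R_e) \cong \Q[\mathbf{x}',\mathbf{x}'']^{W_d\times W_e}$ and $H^*_L\uppertri \cong H^*_P\uppertri \cong H^*_P(R_{d+e}) \cong \Q[\mathbf{x}]^{W_d\times W_e}$ (the Chern roots occurring being in each case restrictions of the $x_{i,\nu}$), while $H^*_G(R_{d+e}) \cong \Q[\mathbf{x}]^{W_{d+e}}$.

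Under these identifications the first three maps become transparent: the external (K\"{u}nneth) product $\HH_d \otimes \HH_e \to H^*_L(R_d\times R_e)$ is $f \otimes g \mapsto f(\mathbf{x}')g(\mathbf{x}'')$, while the pull-back isomorphism along $\uppertri \to R_d\times R_e$ and the inverse of the restriction isomorphism $H^*_P\uppertri \to H^*_L\uppertri$ are the identity on the relevant polynomial rings. The fourth map, $H^*_P\uppertri \to H^{*+s_1}_P(R_{d+e})$, is the Gysin map of the zero section of the $P$-equivariant vector bundle $R_{d+e} \to \uppertri$, hence multiplication by the equivariant Euler class of that bundle; I would identify the bundle as the $P$-representation of south-western blocks, $\bigoplus_{\alpha:i\to j}\Hom(\C^{d_i},\C^{e_j})$, whose Chern roots are the differences $x_{j,\nu}-x_{i,\mu}$ with $1\le\mu\le d_i$, $d_j<\nu\le d_j+e_j$, so its Euler class is $\Delta_1(\mathbf{x})$ and its complex rank $\sum_{\alpha:i\to j}d_ie_j$ accounts for $s_1$. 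Thus after the first four maps $f\otimes g$ has become $f(\mathbf{x}')g(\mathbf{x}'')\Delta_1(\mathbf{x})\in\Q[\mathbf{x}]^{W_d\times W_e}$.

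There remains the last map, $H^*_P(R_{d+e}) \to H^{*+s_0}_G(R_{d+e})$, which---$R_{d+e}$ being $G$-equivariantly contractible---is the Gysin map $\pi_*$ of the fibration $\pi\colon BP\to BG$ with fiber $G/P\cong\prod_i\operatorname{Gr}(d_i,d_i+e_i)$. I would prove the formula
$$
	\pi_*(\varphi)\;=\;\sum_{\sigma}\sigma\!\left(\frac{\varphi}{\Delta_0(\mathbf{x})}\right),\qquad \varphi\in\Q[\mathbf{x}]^{W_d\times W_e},
$$
the sum running over the $(d,e)$-shuffles. Granting it, composing with the previous step yields $f*g=\pi_*\big(f(\mathbf{x}')g(\mathbf{x}'')\Delta_1(\mathbf{x})\big)=\sum_\sigma\sigma\big(f(\mathbf{x}')g(\mathbf{x}'')\,\Delta_1(\mathbf{x})/\Delta_0(\mathbf{x})\big)$, which is the assertion; the degrees match since $\deg\Delta_0 = 2\sum_i d_ie_i = -s_0 = 2\dim_{\C}G/P$.

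The hard part will be this Gysin formula. I would derive it by equivariant (Atiyah--Bott) localization on $G/P$: the fixed points of the maximal torus of $G$ on $\prod_i\operatorname{Gr}(d_i,d_i+e_i)$ are the tuples of coordinate subspaces, and these are in bijection with the $(d,e)$-shuffles $\sigma$; at the point indexed by $\sigma$ the restriction of $\varphi$ is $\sigma(\varphi)$ and the equivariant Euler class of the tangent space is $\sigma(\Delta_0(\mathbf{x}))$, so $\pi_*(\varphi) = \sum_\sigma\sigma(\varphi)/\sigma(\Delta_0(\mathbf{x}))$. The subtlety is bookkeeping the orientations: one has to see that no sign $(-1)^{\ell(\sigma)}$ survives, which is exactly because $\Delta_0$ involves no root of the Levi $L$ and is therefore genuinely $W_d\times W_e$-invariant (not merely anti-invariant), so $\varphi/\Delta_0$ descends unambiguously to the cosets $W_{d+e}/(W_d\times W_e)$. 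An alternative would be to refine the flag of type $(d,e)$ to a full flag, factor $\pi$ as a tower of projective bundles, iterate the elementary projective-bundle Gysin formula, and reassemble the shuffle sum---routine once the conventions are fixed. This is, of course, the computation of \cite[Theorem 2]{KS:11}.
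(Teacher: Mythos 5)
Your sketch is correct, but note that the paper offers no proof of this statement at all---it is quoted verbatim from \cite[Theorem 2]{KS:11}, so there is nothing in the text to compare against. Your argument (identifying the first four maps with multiplication by the Euler class $\Delta_1$ of the bundle of south-western blocks, and computing the Gysin map of $BP\to BG$ over the Grassmannian fiber by localization at the coordinate subspaces, indexed by shuffles, with tangent Euler classes $\sigma(\Delta_0)$) is essentially the standard proof given by Kontsevich--Soibelman.
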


	\subsection*{Construction with Equivariant Chow Groups}

	We can also define the CoHa using Edidin--Graham's equivariant intersection theory (cf.\ \cite{EG:98}). For every $d \in \Gamma$, we put
	$$
		\AA_d(Q) = A_{G_d}^*(R_d)_{\Q}.
	$$
	We define $\AA(Q)$ to be the direct sum over all these abelian groups $\AA_d$. We know that the equivariant cycle map $A_G^*(V) \to H_G^{*}(V)$ is an isomorphism of graded rings (which doubles degrees) if $V$ is a vector space and $G$ is a reductive group (or a parabolic of a reductive group) which acts linearly on $V$. So, as $\Gamma$-graded abelian groups, $\HH$ and $\AA$ coincide. Moreover, the Hecke correspondences
	$$
		R_d \times R_e \ot \uppertri \to R_d
	$$
	described above give suitable maps in equivariant intersection theory which make $\AA$ into a $\Gamma$-graded algebra. The cycle maps are compatible with these maps, whence the isomorphism $\AA \to \HH$ of graded abelian groups is actually an isomorphism of $\Gamma$-graded algebras. If we want to stress that we are working with the intersection theoretic version of the CoHa, we will call it the \textbf{ChowHa}.

	\subsection*{The Symmetric Case}

	If $Q$ is a \emph{symmetric} quiver, i.e.\ for all vertexes $i,j$, there are as many arrows from $i$ to $j$ as from $j$ to $i$, then the Euler form is a symmetric bilinear form. We can then define a refined grading on the CoHa which makes it a $(\Gamma \times \Z)$-graded algebra (cf.\ \cite[Section 2.6]{KS:11}): We put
	$$
		{\HH}_{(d,i)} = H_{G_d}^{i-\chi(d,d)}(R_d)
	$$
	and see that the CoHa-multiplication $*$ maps ${\HH}_{(d,i)} \otimes {\HH}_{(e,j)} \to {\HH}_{(d+e,i+j)}$. The thus obtained $(\Gamma \times \Z)$-graded algebra ${\HH} = \bigoplus_{d,i} {\HH}_{(d,i)}$ can be made into a graded super-commutative algebra: We define the parity of an element of ${\HH}_{(d,i)}$ to be $\epsilon(d) := \chi( d,d )\ (\mathrm{mod}\ 2)$. Using Theorem \ref{Thm_KS}, we can see that for $f \in {\HH}_{(d,i)}$ and $g \in {\HH}_{(e,j)}$, we have
	$$
		f * g = (-1)^{\chi( d,e )} g * f
	$$
	but this does not mean that the multiplication $*$ is super-commutative. Instead, it is possible to twist this multiplication with an appropriate sign making it super-commutative. There exists (see \cite[Section 2.6]{KS:11}) a bilinear form $\psi: (\Z/2\Z)^{Q_0} \times (\Z/2\Z)^{Q_0} \to (\Z/2\Z)$ such that
	$$
			\psi(d,e) + \psi(e,d) = \chi( d,e ) + \epsilon(d)\epsilon(e)\ (\mathrm{mod}\ 2).
	$$
	Thus, the twisted product $f \star g := (-1)^{\psi(d,e)} f*g$ makes ${\HH}$ into a graded super-commutative algebra. Of course, the same construction applies for the ChowHa as well (since the ChowHa is isomorphic to the CoHa). Let $P_Q(q,t) = \sum_d \sum_k (-1)^k \dim\! \big( \HH_{(d,k)}(Q) \big) q^{k/2} t^d \in \Q(q^{1/2})[[t_i \mid i]]$ be the generating series of the CoHa of a symmetric quiver $Q$. Using that the generating series of the ring of symmetric polynomials in $n$ variables is $(1 - q)^{-1} \ldots (1 - q^n)^{-1}$, we see that
	$$
		P_Q(q,t) = \sum_d (-q^{1/2})^{\chi( d,d )} \prod_i \prod_{\nu=1}^{d_i} (1-q^\nu)^{-1} t^d.
	$$
	In \cite[Corollary 3]{KS:11}, it is shown that the generating series of the CoHa has a product expansion
	$$
		P_Q(q,t) = \prod_d \prod_k \prod_{n \geq 0} (1 - q^{n+k/2}t^d)^{(-1)^{k-1}c_{(d,k)}},
	$$
	Observing that the generating series of a free super-commutative algebra generated by one element in bidegree $(d,k)$ is $(1 - q^{k/2}t^d)^{(-1)^{k-1}}$, this product expansion led Kontsevich--Soibelman to a conjecture which was proved by Efimov:

	\begin{thm}[{\cite[Theorem 1.1]{Efimov:12}}] \label{efimov}
		For a symmetric quiver $Q$, the algebra $\HH(Q)$ is isomorphic to a free super-commutative algebra over a $(\Gamma \times \Z)$-graded vector space
		$$
			V = V^{\mathrm{prim}} \otimes \Q[z],
		$$
		where $z$ has bidegree $(0,2)$ and such that $\bigoplus_k V_{(d,k)}^{\mathrm{prim}}$ is finite-dimensional for every $d$.
	\end{thm}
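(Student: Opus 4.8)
The statement is equivalent to saying that $\HH(Q)$, made graded super-commutative by the sign-twisted product $\star$, is a \emph{free super-commutative} (polynomial, in the super sense) algebra, together with the bigraded identification of its space of generators $V$ with $V^{\mathrm{prim}}\otimes\Q[z]$, where $\Q[z]$ is an auxiliary polynomial ring with $z$ in bidegree $(0,2)$ and $\bigoplus_k V^{\mathrm{prim}}_{(d,k)}$ is finite-dimensional for every $d$. The plan is in three steps. First, reduce \emph{free super-commutative} to the indecomposables: writing $\HH_+=\bigoplus_{d\neq0}\HH_d$ and $Q(\HH)=\HH_+/(\HH_+\star\HH_+)$, any bigraded section $V\hookrightarrow\HH_+$ of the projection $\HH_+\twoheadrightarrow Q(\HH)$ induces a surjection $\mathrm{Sym}(V)\twoheadrightarrow\HH$ of bigraded algebras, surjectivity being immediate by induction on the total dimension $|d|=\sum_i d_i$; so the content is entirely that this map is \emph{injective}, equivalently that the chosen generators satisfy no $\star$-polynomial relation.

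Second, pin down the bigraded dimensions of $V$ and produce the generators geometrically. Kontsevich--Soibelman's product expansion $P_Q(q,t)=\prod_{d,k}\prod_{n\geq0}(1-q^{n+k/2}t^d)^{(-1)^{k-1}c_{(d,k)}}$ dictates what the bigraded dimensions of $V$ must be \emph{provided} $(-1)^{k-1}c_{(d,k)}\geq0$ and $\sum_k|c_{(d,k)}|<\infty$ for every $d$. I would obtain both from Harder--Narasimhan theory. When $d$ is coprime for some stability $\theta$, the moduli space $M_d^{\theta\text{-}\mathrm{st}}$ of $\theta$-stable representations is a smooth projective fine moduli space, so its cohomology is pure, finite-dimensional in total, and has palindromic unimodal Betti numbers by hard Lefschetz; running the HN recursion, which relates $P_Q$ to the generating series of the $\theta$-semistable loci and in which $H^*_{G_d}(R_d^{\theta\text{-}\mathrm{sst}})\cong H^*(M_d^{\theta\text{-}\mathrm{st}})\otimes H^*(B\mathbb{G}_m)$ with the factor $H^*(B\mathbb{G}_m)=\Q[z]$ coming from the scalar automorphisms, turns palindromy and unimodality into positivity of the $c_{(d,k)}$ and finiteness of $M_d^{\theta\text{-}\mathrm{st}}$ into finiteness of $\bigoplus_k V^{\mathrm{prim}}_{(d,k)}$; the $\Q[z]$-tensor-factor of $V$ records precisely this hard Lefschetz symmetry. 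Dimension vectors admitting no coprime stability are treated through the smooth projective \emph{framed} moduli (non-commutative Hilbert schemes / smooth models): these are always smooth and projective, hard Lefschetz applies, the framed HN recursion---morally the content of Theorem \ref{main_thm} and Lemma \ref{kack_lemma} of this paper---ties their cohomology to $\HH$, and passing to the limit in the framing transports positivity to $\HH_d$. Concrete generators can be taken to be sign-twisted images of Chern classes of universal bundles, or of fundamental classes of the cells of these models.

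Third, close the argument: once $V$ is known to have the bigraded dimensions of the free super-commutative algebra with Poincar\'e series $P_Q(q,t)$, the surjection $\mathrm{Sym}(V)\twoheadrightarrow\HH$ is an isomorphism simply because $\dim\mathrm{Sym}(V)_{(d,k)}=\dim\HH_{(d,k)}$ in every bidegree. The subtlety is internal to the second step: one must check the geometric construction yields neither too few generators (so that $\mathrm{Sym}(V)$ still surjects once the count is fixed) nor too many (so that $\dim Q(\HH)$ does not exceed the prediction). The cleanest way to secure this is a leading-term argument inside the explicit shuffle algebra of Theorem \ref{Thm_KS}: order the monomials in the $x_{i,\nu}$ and verify that the chosen generators have distinct, algebraically independent leading monomials, so that no non-trivial $\star$-polynomial in them can vanish.

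I expect the main obstacle to be the non-coprime case of the second step---and within it the absence-of-relations conclusion: lacking a fine moduli space with an intrinsic hard Lefschetz package, one must extract the positivity of the $c_{(d,k)}$ from the auxiliary framed spaces and verify that the limit over framings is compatible with $\star$ and cuts out exactly $V^{\mathrm{prim}}$, or else replace this by a direct combinatorial analysis of the kernel of the multiplication $\bigoplus_{e+e'=d,\,e,e'\neq0}\HH_e\otimes\HH_{e'}\to\HH_d$ via the shuffle formula, which is delicate. A constant minor chore is to carry out all generating-series matchings with the correct parities $\epsilon(d)=\chi(d,d)\bmod 2$ and the sign twist $\psi$, so that \emph{free super-commutative} is read with the right signs throughout.
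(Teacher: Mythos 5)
First, a point of reference: the paper does not prove this statement at all --- it is Efimov's theorem, imported verbatim with a citation, and the surrounding text makes clear that the positivity and finiteness of the exponents $c_{(d,k)}$ in the product expansion of $P_Q(q,t)$ are \emph{consequences} of the theorem, not inputs to it. Your proposal inverts this logical order, and that is where the genuine gap sits. In your second step you propose to establish $(-1)^{k-1}c_{(d,k)}\geq 0$ and the finiteness of $\bigoplus_k V^{\mathrm{prim}}_{(d,k)}$ by ``running the HN recursion'' and invoking purity and hard Lefschetz for stable (or framed) moduli spaces. But the HN recursion only expresses $P_Q$ in terms of the Poincar\'e series of the semi-stable loci; converting palindromy and unimodality of the Betti numbers of $M_d^{\theta\text{-}\mathrm{st}}$ (or of the smooth models, in the non-coprime case) into positivity of the exponents $c_{(d,k)}$ is precisely the cohomological integrality statement, which does not follow from hard Lefschetz by a formal manipulation of generating series --- it requires identifying the exponents with dimensions of specific cohomology groups, which is essentially equivalent to the freeness you are trying to prove. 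In the non-coprime case the situation is worse: there may be no stable points for any $\theta$, the framed moduli compute $\HH$ only as a cyclic \emph{module} (Theorem \ref{main_thm}), and extracting the unframed exponents from the limit over framings is not addressed. So the entire difficulty of the theorem has been deferred to an input that is not available by the means you cite.

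Your fallback --- a leading-term/specialization argument inside the explicit shuffle algebra of Theorem \ref{Thm_KS}, showing that chosen generators satisfy no $\star$-polynomial relation --- is in fact the core of Efimov's actual proof, which is purely algebraic: one analyzes the image of the multiplication maps $\HH_{d^1}\otimes\cdots\otimes\HH_{d^r}\to\HH_d$ via restriction of symmetric functions to suitable diagonal loci, shows the space of indecomposables in each degree $d$ is a free finitely generated module over $\Q[z]$ (with $z$ acting by the first power sum $\sum_{i,\nu}x_{i,\nu}$, which is where the tensor factor $\Q[z]$ in bidegree $(0,2)$ comes from), and deduces freeness and the finiteness of $V^{\mathrm{prim}}_d$ simultaneously; positivity of the $c_{(d,k)}$ then drops out at the end. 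Step 1 of your plan (reduction to injectivity of $\mathrm{Sym}(V)\to\HH$) and the dimension-count logic of Step 3 are sound, but as written the proposal does not constitute a proof: the geometric route needs the integrality machinery you have not supplied, and the algebraic route is named but not carried out.
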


	From Efimov's theorem, it follows that the numbers $c_{(d,k)}$ in the product expansion of $P_Q(q,t)$ are positive integers, namely $c_{(d,k)} = \smash{\dim V_{(d,k)}^{\mathrm{prim}}}$.

	\begin{conv*}
		If the quiver is symmetric, we will always understand its CoHa/ChowHa to be equipped with the $(\Gamma \times \Z)$-grading and the super-commutative multiplication, unless otherwise stated.
	\end{conv*}

	\subsection*{Examples}

	We describe the structure of the CoHa in three examples that will accompany us throughout the article. For the explicit descriptions, it is necessary to work with rational coefficients. Examples \ref{exA} and \ref{exB} can also be found in \cite[Section 2.5]{KS:11}.

	\begin{ex} \label{exA}
		Let $\ptquiv$ be the quiver consisting of a single vertex and no arrows. In this case, $\HH(\ptquiv) = \bigoplus_{d \geq 0} \HH_d$ with
		$$
			\HH_d = \Q[x_1,\ldots,x_d]^{S_d}.
		$$
		The Euler form of $\ptquiv$ is $\chi( d,e ) = de$. Therefore, a symmetric polynomial $f \in \HH_d$ homogeneous of degree $n$ lives in bidegree $(d,2n+d^2)$ with respect to the refined grading from the previous section. The product of $f \in \HH_d$ and $g \in \HH_e$ is given by
		$$
			\sum f(x_{\sigma_1},\ldots,x_{\sigma_d})\cdot g(x_{\sigma_{d+1}},\ldots,x_{\sigma_{d+e}}) \cdot \frac{1}{\prod_{\mu=1}^d \prod_{\nu=d+1}^{d+e} (x_{\sigma_\nu}-x_{\sigma_{\mu}})}.
		$$
		We observe that in this case, the usual product is already super-commutative. We will identify ${\HH}$ with the exterior algebra over ${\HH}_1$. We see that $f * f = 0$ for every $f \in {\HH}_1$, whence we obtain a natural homomorphism $\bigwedge({\HH}_1) \to {\HH}$ of $(\Z_{\geq 0} \times \Z)$-graded algebras. Consider the elements $\psi_i \in {\HH}_1$ defined by $\psi_i(x) = x^i$ (the power is taken with respect to the usual multiplication of polynomials, \emph{not} the CoHa-multiplication). Then, $\psi_i \in {\HH}_{(1,2i+1)}$ and $\psi_0,\psi_1,\ldots$ form a basis of $\HH_1$. An induction shows that for $0 \leq k_1 < \ldots < k_d$, we have
		$$
			(\psi_{k_1} * \ldots * \psi_{k_d})(x_1,\ldots,x_d) = s_\lambda(x_1,\ldots,x_d),
		$$
		where $s_\lambda$ is the Schur function belonging to the partition $\lambda = (k_d-d+1,\ldots,k_2-1,k_1)$. Hence, the induced homomorphism $\bigwedge(\psi_0,\psi_1,\ldots) \to {\HH}$ is surjective. A comparison of the generating series shows that it is in fact an isomorphism. So, in this case, 
		$$
			V^{\mathrm{prim}} = {\HH}_{(1,1)} = \Q \cdot \psi_0,
		$$
		the one-dimensional bigraded vector space concentrated in bidegree $(1,1)$.
	\end{ex}

	\begin{ex} \label{exB}
		Let $\loopquiv$ be the quiver with one vertex and one loop. As a $\Z_{\geq 0}$-graded vector space, the CoHa $\HH(\loopquiv)$ coincides with the CoHa of $\ptquiv$. However, the multiplication differs. For $f \in \HH_d$ and $g \in \HH_e$, the product $f * g$ equals
		$$
			\sum f(x_{\sigma_1},\ldots,x_{\sigma_d})\cdot g(x_{\sigma_{d+1}},\ldots,x_{\sigma_{d+e}}).
		$$
		The Euler form of the loop quiver is trivial, wherefore an $f \in \HH_d$ which is homogeneous of degree $n$ is located in $\HH_{(d,2n)}$. Similar to Example \ref{exA}, we consider the natural map $\Sym(\HH_1) \to \HH$ from the symmetric algebra over $\HH_1$ to the CoHa. Let $\phi_i(x) = x^i \in \HH_1$. It lives in bidegree $(1,2i)$. For a sequence $k_1 \geq k_2 \geq \ldots \geq k_d$, we get
		$$
			(\phi_{k_1} * \ldots * \phi_{k_d})(x_1,\ldots,x_d) = c_\lambda \cdot m_\lambda(x_1,\ldots,x_d),
		$$
		where $m_\lambda$ is the monomial symmetric function attached to the partition $\lambda = (k_1,\ldots,k_d)$ and $c_\lambda$ is some positive integer. Comparing the two generating series yields that the natural map $\Q[\phi_0,\phi_1,\ldots] \to \HH$ is an isomorphism. That means that for the loop quiver,
		$$
			V^{\mathrm{prim}} = {\HH}_{(1,0)} = \Q \cdot \phi_0
		$$
		which lives in bidegree $(1,0)$.
	\end{ex}

	\enlargethispage{\baselineskip}
		
	\begin{ex} \label{exC_first}
		Our last example is the quiver $\tildeAOne$ of type $\tilde{A}_1$ with the symmetric orientation. A dimension vector for $\tildeAOne$ is a pair $(m,n)$ of integers which we denote $m \rla n$, in order to avoid ambiguous notation. Here, $\HH(\tildeAOne) = \bigoplus_{m,n \geq 0} \HH_{m \rla n}$ with
		$$
			\HH_{m \rla n} = \Q[x_1,\ldots,x_m,y_1,\ldots,y_n]^{S_m \times S_n}.
		$$
		Let $f \in \HH_{m \rla n}$ and $g \in \HH_{r \rla s}$. The product $f * g$, which lives in $\HH_{(m+r) \rla (n+s)}$, is the polynomial
		$$
			\sum f(x_{\sigma_1},\ldots,x_{\sigma_m},y_{\tau_1},\ldots,y_{\tau_n}) g(x_{\sigma_{m+1}},\ldots,x_{\sigma_{m+r}},y_{\tau_{n+1}},\ldots,y_{\tau_{n+s}}) \cdot \frac{\Delta_1(x_{\sigma_i},y_{\tau_j})}{\Delta_0(x_{\sigma_i},y_{\tau_j})},
		$$
		where $\sigma$ is an $(m,r)$-shuffle, $\tau$ an $(n,s)$-shuffle, and
		\begin{align*}
			\Delta_1(x_1,\ldots,x_{m+r},y_1,\ldots,y_{n+s}) &= \prod_{i=1}^m \prod_{j'=n+1}^{n+s} (y_{j'} - x_{i}) \prod_{i'=m+1}^{m+r} \prod_{j=1}^n (x_{i'} - y_{j}) \\
			\Delta_0(x_1,\ldots,x_{m+r},y_1,\ldots,y_{n+s}) &= \prod_{i=1}^m \prod_{i'=m+1}^{m+r} (x_{i'} - x_{i}) \prod_{j=1}^n \prod_{j'=n+1}^{n+s} (y_{j'} - y_{j}).
		\end{align*}
		For two dimension vectors $m \rla n$ and $r \rla s$, the Euler form is given by $\chi( m \rla n, r \rla s ) = mr + ns - ms - nr$ and thus $\chi( m \rla n, m \rla n ) = (m-n)^2$. We obtain that a polynomial $f \in  \HH_{m \rla n}$ homogeneous of degree $k$ lies in bidegree $(m \rla n,2k + (m-n)^2)$.  Observing that $\chi( m \rla n, r \rla s )$ has always the same parity as $\epsilon(m \rla n)\epsilon(r \rla s)$, we see that the CoHa-multiplication $*$ is in this case already super-commutative. We will see in the following that we can construct the vector space $V^{\mathrm{prim}}$ explicitly, like in Examples \ref{exA} and \ref{exB}.
	\end{ex}
	
	\section{Semi-Stable CoHa and Semi-Stable ChowHa}

	Let's briefly recall the notion of semi-stability. In addition to fixing $Q$, we fix a stability condition $\theta$, i.e.\ a $\Z$-linear map $\Z^{Q_0} \to \Z$ (or, more generally, a $\Q$-linear map $\Q^{Q_0} \to \Q$). Whenever it is convenient, we will suppress the dependency on $Q$ and $\theta$ in the notation. Define the associated slope function $\slope = \slope_{\theta}$ by assigning to $0 \neq d \in \Gamma$ the value
	$$
		\slope(d) = \frac{\theta(d)}{\sum_i d_i}.
	$$
	Abbreviate $\slope(M) = \slope(\dimvect M)$ for any representation $M \neq 0$ of $Q$. A representation $M$ of $Q$ is called $\theta$-\textbf{semi-stable} if $\slope(M') \leq \slope(M)$ for every (non-zero) subrepresentation $M'$ of $M$. It is called $\theta$-\textbf{stable} if the above inequality is strict, unless $M' = M$. Interpreting $M$ as a point of the variety $R_d$, King has shown in \cite{King:94} that this notion of (semi-)stability can be realized as a notion of (semi\nobreakdash-)stability in the sense of Mumford's geometric invariant theory (cf.\ \cite{GIT:94}). We define $R_d^{\theta-\sst}$ to be the open subset of all $\theta$-semi-stable points of $R_d$. An easy observation shows that
	$$
		\slope(d) \leq \slope(d+e) \gdw \slope(d+e) \leq \slope(e) \gdw \slope(d) \leq \slope(e)
	$$
	for all dimension vectors $d$ and $e$ of $Q$. Therefore, given a short exact sequence $0 \to M' \to M \to M'' \to 0$ of representations of $Q$ and provided that their slopes are equal, $M$ is semi-stable if and only if both $M'$ and $M''$ are.

	\subsection*{Semi-Stable CoHa}

	The above considerations enable us to restrict the Hecke correspondences from Section 1 to the semi-stable loci (as introduced in \cite{KS:11}). Given dimension vectors $d$ and $e$ of the same slope, say $\mu$, the sum $d+e$ has also slope $\mu$ and the map $\uppertri \to R_d \times R_e$ from above restricts to a map 
	$$
		\uppertri^{\sst} := \uppertri \cap R_{d+e}^{\sst} \to R_d^{\sst} \times R_e^{\sst}.
	$$
	This map is $L$-equivariant and a vector bundle as
	\begin{center}
		\begin{tikzpicture}[description/.style={fill=white,inner sep=2pt}]
			\matrix(m)[matrix of math nodes, row sep=1.5em, column sep=3em, text height=1.5ex, text depth=0.25ex]
			{
				\uppertri^{\sst}	& R_d^{\sst} \times R_e^{\sst} \\
				\uppertri		& R_d \times R_e \\
			};
			\path[->, font=\scriptsize]
			(m-1-1) edge 	(m-1-2)
			(m-1-1) edge 	(m-2-1)
			(m-1-2) edge 	(m-2-2)
			(m-2-1) edge 	(m-2-2);
		\end{tikzpicture}
	\end{center}
	is a cartesian diagram. We may thus carry through the same construction as for the CoHa-multi\-pli\-cation and obtain a linear map
	$$
		H_{G_d}^i(R_d^{\sst}) \otimes H_{G_e}^j(R_e^{\sst}) \to H_{G_{d+e}}^{i+j-2\chi( d,e )}(R_{d+e}^{\sst}).
	$$
	Thus, when defining $\Gamma_{\mu}$ to be the submonoid of $\Gamma$ consisting of $0$ and of all $d \neq 0$ with $\slope(d) = \mu$ and putting $\HH_d^{\theta-\sst}(Q) = H_{G_d}^*(R_d^{\sst})$, we obtain a $\Gamma_\mu$-graded algebra by
	$$
		\HH^{\theta-\sst,\mu}(Q) := \bigoplus_{d \in \Gamma_\mu} \HH_d^{\theta-\sst}(Q).
	$$

	\begin{defn*}
		We call $\HH^{\theta-\sst,\mu}(Q)$ the $\theta$-\textbf{semi-stable CoHa of} $Q$ \textbf{of slope} $\mu$.
	\end{defn*}

	It is evident that, when choosing $\theta = 0$, we recover the CoHa as $\HH^{0-\sst,0}$.
	By the above cartesian diagram, we can see that pulling back along the open embeddings $R_d^{\sst} \to R_d$ yields a homomorphism of ($\Gamma_\mu$-graded) algebras
	$
		\HH^\mu \to \HH^{\sst,\mu}
	$
	where $\HH^\mu$ is the subalgebra $\bigoplus_{d \in \Gamma_\mu} \HH_d$ of $\HH$.

	\subsection*{Semi-Stable ChowHa}

	Just like in the case of the ChowHa $\AA$, we can also define a variant of the semi-stable CoHa using equivariant Chow groups, say $\AA^{\theta-\sst,\mu}(Q)$, by defining it to be the direct sum over all
	$$
		\AA_d^{\theta-\sst}(Q) = A_{G_d}^*(R_d^{\sst})
	$$
	for $d \in \Gamma_{\mu}$. Again, $\AA^{\sst,\mu}$ is a $\Gamma_\mu$-graded algebra. 

	\begin{defn*}
		We call the algebra $\AA^{\theta-\sst,\mu}(Q)$ the $\theta$-\textbf{semi-stable ChowHa of} $Q$ \textbf{of slope} $\mu$.
	\end{defn*}

	There is a homomorphism of (graded) algebras $\AA^{\sst,\mu} \to \HH^{\sst,\mu}$ which is induced by the equivariant cycle map. 

	In \cite[Proposition 2.5]{Reineke:03}, Reineke proves that every representation $M$ of $Q$ possesses a unique filtration $M = M^r \supseteq \ldots \supseteq M^1 \supseteq M^0 = 0$ such that every subquotient $M^\nu/M^{\nu-1}$ is semi-stable and which satisfies $\slope(M^1/M^0) > \ldots > \slope(M^r/M^{r-1})$. This filtration is called the \textbf{Harder--Narasimhan (HN) filtration} of $M$ (with respect to $\theta$). Denoting the dimension vector of $M^{\nu}/M^{\nu-1}$ by $d^{\nu}$, the tuple $d^* = (d^1,\ldots,d^r)$ is called the HN type of $M$. The set of HN types of $d$ of length $r$ will be denoted $\HN_r(d)$. The set $R_{d^*}^{\mathrm{HN}}$ of all representations of $Q$ having HN type $d^*$ is an irreducible, locally closed subset of $R_d$. Clearly, $R_d$ equals the disjoint union $\bigsqcup_{d^*} R_{d^*}^{\mathrm{HN}}$ ranging over all possible HN types which sum to $d$.

	For the semi-stable ChowHa, the natural map $\AA^\mu \to \AA^{\sst,\mu}$ is clearly surjective. Under some strong hypotheses about the HN stratification, we are able to give a description of the kernel of $\AA_d \to \AA_d^{\sst}$.

	\label{HN}

	\begin{lem} \label{kack_lemma}
		Let $d$ be a dimension vector such that for every HN type $d^*$ of $d$, the closure of the HN stratum $R_{d^*}^{\HN}$ is a union of HN strata. Then, the kernel of $\AA_d \to \AA_d^{\theta-\sst}$ equals
		$$
			\sum_{r \geq 2}\ \sum_{d^* \in \HN_r(d)} \AA_{d^1} * \ldots * \AA_{d^r}.
		$$
	\end{lem}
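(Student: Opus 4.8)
The plan is to analyze the closed complement $R_d \setminus R_d^{\sst}$ via the HN stratification and to use the localization sequence in equivariant Chow groups together with the fact that each HN stratum is (equivariantly) an affine bundle over a product of semi-stable loci. First I would recall that $R_d = \bigsqcup_{d^*} R_{d^*}^{\HN}$ and that, by Reineke's work, the stratum $R_{d^*}^{\HN}$ with $d^* = (d^1,\dots,d^r)$ is $G_d$-equivariantly isomorphic to an associated fibre bundle $G_d \times_{P_{d^*}} (Z_{d^*} \times \prod_\nu R_{d^\nu}^{\sst})$, where $P_{d^*}$ is the parabolic of block upper-triangular type and $Z_{d^*}$ is an affine space (the ``strictly upper-triangular'' part). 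By the standard reduction $A^*_{G_d}(G_d \times_{P_{d^*}} Y) \cong A^*_{P_{d^*}}(Y) \cong A^*_{L_{d^*}}(Y)$ together with homotopy invariance stripping off $Z_{d^*}$, one gets $A^*_{G_d}(R_{d^*}^{\HN}) \cong \bigotimes_\nu \AA_{d^\nu}^{\sst}$, up to the usual degree shift by (twice) the codimension. Crucially, one identifies the pushforward $A^*_{G_d}(R_{d^*}^{\HN}) \to A^*_{G_d}(R_d)$ along this closed (after the stratification hypothesis, see below) embedding with the iterated ChowHa multiplication $\AA_{d^1} * \cdots * \AA_{d^r}$ — this is exactly the same composition of a vector-bundle (Thom) isomorphism and a proper pushforward along a projective fibration $G_d/P_{d^*}$ that defines $*$, just restricted to the relevant locus. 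This identification is the conceptual heart of the argument.

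Next I would set up the induction on the partial order on HN types given by ``$e^* \leq d^*$ iff $R_{e^*}^{\HN} \subseteq \overline{R_{d^*}^{\HN}}$''; the hypothesis of the lemma is precisely what makes this a well-defined stratification into closed-union pieces, so that one may peel off strata one at a time by the localization exact sequence
$$
A^*_{G_d}(R_d \setminus R_{d^{\sst}}) \to A^*_{G_d}(R_d) \to A^*_{G_d}(R_d^{\sst}) \to 0.
$$
Here $R_d \setminus R_d^{\sst} = \bigsqcup_{r \geq 2}\, \bigsqcup_{d^* \in \HN_r(d)} R_{d^*}^{\HN}$, and I would argue that the image of $A^*_{G_d}(R_d \setminus R_d^{\sst})$ in $A^*_{G_d}(R_d) = \AA_d$ is spanned by the images of the individual closed strata — one filters $R_d \setminus R_d^{\sst}$ by closed subsets (unions of strata of increasing dimension, permissible by the hypothesis) and uses the localization sequences for the successive open complements, noting that surjectivity of $\AA_d \to \AA_d^{\sst}$ already tells us the whole kernel comes from this closed complement. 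Combining with the previous paragraph, the image of the closed stratum $R_{d^*}^{\HN}$ is exactly $\AA_{d^1} * \cdots * \AA_{d^r}$, and summing over all HN types of length $\geq 2$ gives the stated kernel. One inclusion ($\supseteq$, i.e.\ that each such product lies in the kernel) is in fact easy and unconditional: these classes are pushed forward from a locus disjoint from $R_d^{\sst}$, hence restrict to zero there.

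The main obstacle, and the place where the stratification hypothesis is genuinely used, is that without it the ``strata'' are only locally closed and their closures can meet $R_d^{\sst}$ or overlap in complicated ways, so one cannot present $R_d \setminus R_d^{\sst}$ as an iterated extension of the pieces $R_{d^*}^{\HN}$ by \emph{closed} inclusions — and then the localization sequences no longer chain together to express the kernel in terms of the individual $\AA_{d^1} * \cdots * \AA_{d^r}$. I would therefore spend the bulk of the write-up making the filtration of $R_d \setminus R_d^{\sst}$ by closed unions of HN strata explicit (ordering HN types by dimension, and checking compatibility with the closure condition), and then verifying carefully that the composite $A^*_{G_d}(R_{d^*}^{\HN}) \to \AA_d$ really is the ChowHa product; the second point is where one must be attentive to excess-intersection/normal-bundle bookkeeping, but it is formally parallel to the construction of $*$ already recalled in Section~1.
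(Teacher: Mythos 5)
Your proposal is correct and follows essentially the same route as the paper: filter the unstable locus by closed unions of HN strata (which is exactly what the hypothesis permits), chain together the localization sequences by induction over the resulting partial order on HN types, and identify the pushforwards with iterated ChowHa products via the description of each stratum as an associated bundle $G_d \times^{P_{d^*}} (\text{affine bundle over } \prod_\nu R_{d^\nu}^{\sst})$. The only points you gloss over that the paper makes explicit are (i) the surjectivity of the exterior-product map for equivariant Chow groups (Totaro's argument), needed both for your asserted isomorphism $A^*_{G_d}(R_{d^*}^{\HN}) \cong \bigotimes_\nu \AA^{\sst}_{d^\nu}$ and to conclude that the image of the pushforward is exactly $\AA_{d^1} * \cdots * \AA_{d^r}$ and nothing more, and (ii) the fact that the strata are only locally closed, which the paper handles by pushing forward from the proper ``resolutions'' $W_{d^*} \times^{P_{d^*}} G_d \to \overline{R_{d^*}^{\HN}}$ (isomorphisms over the open stratum by uniqueness of the HN filtration) and controlling the boundary terms inductively --- this is the precise form of your ``lift along successive localization sequences'' step.
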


	\begin{proof}
		For a HN type $d^* = (d^1,\ldots,d^r)$ of $d$, denote by $R_{d^*}$ the closure of $R_{d^*}^{\HN}$ in $R_d$. It coincides with the subset of those $M \in R_d$ which possess a filtration
		$$
			M = M^r \supseteq \ldots \supseteq M^1 \supseteq M^0 = 0
		$$ 
		such that $\dimvect M^\nu/M^{\nu-1} = d^\nu$. Under the assumptions of the lemma, we can define an order on the set of HN types of $d$ as follows: For two HN types $d^*$ and $e^*$ of $d$, define $d^* \unrhd e^*$ if $R_{d^*}$ is contained in $R_{e^*}$. Let
		$$
			P_{d^*} = \begin{pmatrix} \Gl_{d^1} & \ldots & * \\ & \ddots & \vdots \\ & & \Gl_{d^r} \end{pmatrix}, \quad W_{d^*} = \begin{pmatrix} R_{d^1} & \ldots & * \\ & \ddots & \vdots \\ & & R_{d^r} \end{pmatrix},\ \text{and} \quad %
			W_{d^*}^o = \begin{pmatrix} R_{d^1}^{\sst} & \ldots & * \\ & \ddots & \vdots \\ & & R_{d^r}^{\sst} \end{pmatrix}.
		$$
		The group $P_{d^*}$ is a parabolic subgroup of $G = G_d$ whose Levi factor is $L_{d^*} = G_{d^1} \times \ldots \times G_{d^r}$. The projection $W_{d^*} \to R_{d^1} \times \ldots \times R_{d^r}$ is an $L_{d^*}$-equivariant vector bundle whence we have isomorphisms
		$$
			A_{L_{d^*}}^{j+\chi( d^1,\ldots,d^r)}(R_{d^1} \times \ldots \times R_{d^r}) \cong A_{P_{d^*}}^{j+\chi( d^1,\ldots,d^r)}(W_{d^*}) \cong A^G_n(W_{d^*} \times^{P_{d^*}} G)
		$$
		with $n = \dim R_d - j$ and $\chi( d^1,\ldots,d^r ) = \smash{\sum_{j < l}} \chi( d^j,d^l )$. We note that the equivariant product map $A_{G_d}^*(R_d^{\sst}) \otimes A_{G_d}^*(R_e^{\sst}) \to A_{G_d \times G_e}^*(R_d^{\sst} \times R_e^{\sst})$ is surjective (even with integral coefficients) as Totaro's argument from \cite[Lemma 6.1]{Totaro:99} can also be applied to equivariant Chow rings. Therefore, we are bound to show that the sequence
		\begin{equation} \tag{*}
			\bigoplus_{d^* \rhd (d)} A_n^G(W_{d^*} \times^{P_{d^*}} G) \xto{}{\psi} A_n^G(R_d) \to A_n^G(R_d^{\sst}) \to 0
		\end{equation}
		is exact. The map $\psi$ is given by the sum of the push-forwards of $W_{d^*} \times^{P_{d^*}} G \to R_d$. By the well known exact sequence for (equivariant) Chow groups, we have an exact sequence $A_n^G(R_d^{\unst}) \to A_n^G(R_d) \to A_n^G(R_d^{\sst}) \to 0$, whence it suffices to show that $\psi$ induces a surjection onto $A_n^G(R_d^{\unst})$. For every HN type $d^*$, let $R_{d^*}^c$ be the complement of $R_{d^*}^{\HN}$ in $R_{d^*}$. In particular, choosing $d^* = (d)$, the unstable locus coincides with $R_{(d)}^c$. We show by induction on $d^*$ that the map
		$$
			\psi_{e^*}: \bigoplus_{d^* \rhd e^*} A_n^G(W_{d^*} \times^{P_{d^*}} G) \to A_n^G(R_{e^*}^c)
		$$
		sending $\alpha_{d^*}$ to $\psi_{d^*,e^*}(\alpha_{d^*})$ is surjective---which then completes the proof of the lemma. Here, $\psi_{d^*,e^*}$ is the push-forward of the proper morphism $W_{d^*} \times^{P_{d^*}} G \to R_{d^*} \to R_{e^*}^c$. For $e^*$ maximal, the desired surjectivity is obvious, whence we proceed to a non-maximal $e^*$. Let $\gamma_{e^*} \in A_n^G(R_{e^*})$. By assumption, $R_{e^*}^c = \bigcup_{d^* \rhd e^*} R_{d^*}$ whence the map $\bigoplus_{d^* \rhd e^*} A_n^G(R_{d^*}) \to A_n^G(R_{e^*}^c)$ is surjective. Choose an inverse image $\sum_{d^*} \beta_{d^*}$ of $\gamma_{e^*}$ under this map. Consider the commutative diagrams
		\begin{center}
			\begin{tikzpicture}[description/.style={fill=white,inner sep=2pt}]
				\matrix(m)[matrix of math nodes, row sep=1.5em, column sep=3em, text height=1.5ex, text depth=0.25ex]
				{
					W_{d^*}^o \times^{P_{d^*}} G & W_{d^*} \times^{P_{d^*}} G & R_d \times^{P_{d^*}} G \\
					R_{d^*}^{\HN} & R_{d^*} & R_d \\
				};
				\path[->, font=\scriptsize]
				(m-1-1) edge 	(m-1-2)
				(m-1-1) edge 	(m-2-1)
				(m-1-2) edge 	(m-2-2)
				(m-2-1) edge 	(m-2-2)
				(m-1-3) edge 	(m-2-3)
				(m-1-2) edge 	(m-1-3)
				(m-2-2) edge	(m-2-3);
			\end{tikzpicture}
		\end{center}
		induced by the natural morphism $R_d \times^{P_{d^*}} G \to R_d$. The left squares in the above diagram is cartesian. The uniqueness of the HN filtration implies that $W_{d^*}^o \times^{P_{d^*}} G \to R_{d^*}^{\HN}$ is an isomorphism. Denote by $W_{d^*}^c$ the complement of $W_{d^*}^o$ in $W_{d^*}$. The cartesian diagram
		\begin{center}
			\begin{tikzpicture}[description/.style={fill=white,inner sep=2pt}]
				\matrix(m)[matrix of math nodes, row sep=1.5em, column sep=3em, text height=1.5ex, text depth=0.25ex]
				{
					W_{d^*}^c \times^{P_{d^*}} G & W_{d^*} \times^{P_{d^*}} G\\
					R_{d^*}^c & R_{d^*} \\
				};
				\path[->, font=\scriptsize]
				(m-1-1) edge 	(m-1-2)
				(m-1-1) edge 	(m-2-1)
				(m-1-2) edge node[auto] {$\pi^{d^*}$} 	(m-2-2)
				(m-2-1) edge node[auto] {$i^{d^*}$}		(m-2-2);
			\end{tikzpicture}
		\end{center}
		induces an exact sequence
		$$
			A_n^G(W_{d^*}^c \times^{P_{d^*}} G) \to A_n^G(R_{d^*}^c) \oplus A_n^G(W_{d^*} \times^{P_{d^*}} G) \to A_n^G(R_{d^*}) \to 0
		$$
		using \cite[Example 1.8.1]{Fulton:98}. The surjection in the above sequence is given by mapping $\gamma_{d^*} + \alpha_{d^*,d^*}$ to $i_*^{d^*}(\gamma_{d^*}) + \pi_*^{d^*}(\alpha_{d^*,d^*})$. Let $\gamma_{d^*} + \alpha_{d^*,d^*}$ be an inverse image of $\beta_{d^*}$. By the induction assumption, $\psi_{d^*}$ is surjective, whence there exist $\alpha_{f^*,d^*}$ for $f^* \rhd d^*$ such that $\gamma_{d^*} = \sum_{f^*} \psi_{f^*,d^*}(\alpha_{f^*,d^*})$. This implies that
		$$
			\gamma_{e^*} = \sum_{d^*} \Big( \psi_{d^*,e^*}(\alpha_{d^*,d^*}) + \sum_{f^* \rhd d^*} \psi_{f^*,d^*}(\alpha_{f^*,d^*}) \Big) = \sum_{f^*} \psi_{f^*,e^*} \Big( \sum_{d^* \unlhd f^*} \alpha_{f^*,d^*} \Big)
		$$ 
		which completes the proof.
	\end{proof}

	\begin{rem}
		Unfortunately, the conditions of Lemma \ref{kack_lemma} are rarely fulfilled. In \cite[Section 3]{Reineke:03}, Reineke shows that for the example of a linearly oriented $A_3$ quiver with dimension vector $(1,1,1)$, the assumption of the lemma does not hold. However, we can apply the above result to, for example, the quiver $\tildeAOne$ (cf.\ Proposition \ref{tensorProd}). Additionally, the ``relative'' HN stratification of smooth models fulfills this requirement, which enables us to prove Theorem \ref{main_thm} with the same methods as Lemma \ref{kack_lemma}.
	\end{rem}

	\subsection*{Generating Series}

	Fix a finite field $\F = \F_q$. We define the \textbf{completed Hall algebra} of a quiver $Q$ (which is a completed version of the Hall algebra as in \cite{Ringel:90} and \cite{Green:95}) as the vector space
	$$
		H = H((Q)) = \Big\{ f \mid f: \bigsqcup_{d \in \Z_{\geq 0}^{Q_0}} R_d(\F)/G_d(\F) \to \Q \Big\}
	$$
	equipped with the following multiplication: For two functions $f$ and $g$, we define their product $f \circ g$ by
	$
		(f \circ g)(X) = \sum_{U \sub X} f(U) g(X/U).
	$
	The sum ranges over all subrepresentations $U$ of $X$ over $\F$ and is thus finite. We can see that $H((Q))$ becomes an associative algebra. Let's assume that the quiver $Q$ is symmetric (the results below can also be stated for non-symmetric quivers, but they are a little more technical then). Reineke has shown in \cite{Reineke:06} that the map $\int: H \to \Q(q^{1/2})[[t_i \mid i \in Q_0]]$ defined by
	$$
		\int f = \sum_{[X]} \frac{(-q^{1/2})^{\chi( \dimvect X, \dimvect X )}}{\sharp \Aut(X)} \cdot f(X) \cdot t^{\dimvect X}
	$$
	is a homomorphism of algebras. Let $\one \in H$ be the constant function with value one (this isn't the unit element of $H$). Then we compute
	$$
		\int \one = \sum_{d} (-q^{1/2})^{\chi( d,d )} \frac{\sharp R_d(\F)}{\sharp G_d(\F)} t^d = \sum_d (-q^{1/2})^{-\chi( d,d )} \prod_i \prod_{\nu = 1}^{d_i} (1-q^{-\nu})^{-1} t^d
	$$
	which is precisely $P_Q(q^{-1},t)$, the generating series of the CoHa $\HH(Q)$ (with the $(\Gamma \times \Z)$-grading) evaluated at $q^{-1}$. For a stability condition $\theta$ and a rational number $\mu$, we define $\one^{\sst,\mu} = \one^{\theta-\sst,\mu} \in H$ by
	$$
		\one^{\sst,\mu}(M) = \begin{cases} 1 & \text{if } M \text{ is } \theta\text{-semi-stable of slope }\mu \\ 0 & \text{otherwise}. \end{cases}
	$$
	Integrating this function, we obtain
	$$
		\int \one^{\sst,\mu} = \sum_{d} (-q^{1/2})^{\chi( d,d )} \frac{\sharp R_d^{\sst}(\F)}{\sharp G_d(\F)} t^d.
	$$
	Using the Harder--Narasimhan stratification described on page \pageref{HN}, Reineke has shown in \cite{Reineke:03}:

	\begin{thm}[{\cite[Proposition 4.12]{Reineke:03}}] \label{reineke}
		In $H((Q))$, we have the identity $\one = \prod\limits_{\mu \in \Q}^{\ot} \one^{\sst,\mu}$.
	\end{thm}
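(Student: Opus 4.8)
The plan is to evaluate both sides of the asserted identity at an arbitrary isomorphism class $[X]$ of representations of $Q$ over $\F$ and to show that both equal $1$; for the left-hand side this is the definition of $\one$, so all the work is on the right. First I would check that $\prod_{\mu \in \Q}^{\ot}\one^{\sst,\mu}$ is a well-defined element of $H((Q))$. If $d = \dimvect X$, then in the expansion of the product every flag of subrepresentations of $X$ has subquotients whose dimension vectors sum to $d$, hence whose slopes lie in the finite set $S(d) = \{\theta(e)/\sum_i e_i \mid 0 \neq e \leq d\}$; since a factor $\one^{\sst,\mu}$ with $\mu \notin S(d)$ can only contribute through a zero subquotient (using the convention $\one^{\sst,\mu}(0) = 1$), the descending partial products stabilise on the $d$-graded piece, and there the infinite product agrees with the finite product $\one^{\sst,\mu_1}\circ\cdots\circ\one^{\sst,\mu_m}$ over the elements $\mu_1 > \cdots > \mu_m$ of $S(d)$ listed in decreasing order.

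By an easy induction on $m$ using $(f \circ g)(X) = \sum_{U \subseteq X} f(U)\,g(X/U)$, this finite product evaluated at $X$ equals
$$
\sum_{0 = V_0 \subseteq V_1 \subseteq \cdots \subseteq V_m = X}\ \prod_{j=1}^m \one^{\sst,\mu_j}(V_j/V_{j-1}),
$$
the sum taken over all flags of subrepresentations of $X$. A term is nonzero precisely when each subquotient $V_j/V_{j-1}$ is either zero or $\theta$-semistable of slope $\mu_j$, in which case the term equals $1$; deleting the repeated entries that correspond to zero subquotients gives a bijection between such flags and filtrations of $X$ whose nonzero subquotients are $\theta$-semistable with \emph{strictly} decreasing slopes (the inverse map inserts repeats at the positions of the missing $\mu_j$'s, which is legitimate since the slope of any such subquotient again lies in $S(d)$). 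Hence $\big(\prod_{\mu}^{\ot}\one^{\sst,\mu}\big)(X)$ is exactly the number of filtrations of $X$ of this latter kind.

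Finally I would invoke \cite[Proposition 2.5]{Reineke:03}: every representation $X$ admits a unique filtration with semistable subquotients of strictly decreasing slopes, namely its Harder--Narasimhan filtration. Therefore the sum above has exactly one nonzero term, so $\big(\prod_{\mu}^{\ot}\one^{\sst,\mu}\big)(X) = 1 = \one(X)$, and the identity follows. The mathematical substance is entirely carried by the quoted HN existence-and-uniqueness statement; the only point requiring genuine care is the first paragraph, i.e.\ making rigorous the sense in which the densely indexed descending product over $\mu \in \Q$ converges in the completed Hall algebra and interchanging this convergence with evaluation at $X$.
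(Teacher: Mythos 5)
Your argument is correct and is essentially the proof the paper points to: the paper does not prove this statement itself but cites Reineke, remarking only that it follows from the Harder--Narasimhan stratification, and Reineke's proof is exactly your computation --- expand the ordered product via the Hall multiplication, identify the nonzero terms with filtrations whose nonzero subquotients are $\theta$-semi-stable of strictly decreasing slope, and conclude from the existence and uniqueness of the HN filtration that exactly one term contributes $1$ at each isomorphism class $[X]$. The only cosmetic caveat is that the paper already \emph{defines} the infinite product as the series $\sum_{r \geq 1} \sum_{\mu^1 > \cdots > \mu^r} \one^{\sst,\mu^1} \circ \cdots \circ \one^{\sst,\mu^r}$, so your first paragraph (the convergence of partial products and the convention $\one^{\sst,\mu}(0)=1$) can be replaced by the simpler observation that for fixed $d = \dimvect X$ only the finitely many slope sequences drawn from $S(d)$ give nonzero terms.
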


	The infinite product above is defined as the series $\sum_{r \geq 1} \sum_{\mu^1 > \ldots > \mu^r} \one^{\sst,\mu^1} \circ \ldots \circ \one^{\sst,\mu^r}$. Being a continuous homomorphism of algebras, the map $\smash{\int}$ preserves this identity. In \cite[5.2]{KS:11}, Kontsevich--Soibelman asked the following question: Given a quiver $Q$ and a stability condition $\theta$, do there exist embeddings $\HH^{\sst,\mu} \into \HH$ of algebras for every $\mu$ such that the induced map $\bigotimes_{\mu \in \Q}^{\ot} \HH^{\sst,\mu} \to \HH$ (by multiplication from left to right) is an isomorphism? They gave a positive answer for the CoHa of $A_2$. In \cite{Rimanyi:13}, Rim{\'a}nyi showed that this is true for all Dynkin quivers except for type $E_8$. We give another example where such an isomorphism exists.

	\subsection*{An Example}

	Like in Example \ref{exC_first}, we consider the quiver $\tildeAOne$ and let $d = (m \rla n)$ be a dimension vector. A representation of $\tildeAOne$ of this dimension vector consists of a pair of matrices $A \in M_{n \times m}(\C)$ and $B \in M_{m \times n}(\C)$. On the vector space $R_d = M_{n \times m} \oplus M_{m \times n}$, we have the action of the linear algebraic group $G_d = \Gl_m \times \Gl_n$ via
	$$
		(g,h) * (A,B) = (hAg^{-1},gBh^{-1}).
	$$
	Fix the stability condition $\theta = (1,-1)$. The associated slope function is given by $\slope(m \rla n) = \frac{m-n}{m+n}$. The largest possible slope is $1$, for dimension vectors $m \rla 0$, while the smallest slope is $-1$, which we obtain for dimension vectors $0 \rla n$.

	\label{secondEx}

	For $d = (m \rla 0)$, there is a unique representation of this dimension vector and this one is $\theta$-semi-stable (let's omit $\theta$ in the following). Likewise, we can see that $R_{0 \rla n} = R_{0 \rla n}^{\sst} = \pt$. Let's consider dimension vectors of slope $0$, i.e.\ $d = (m \rla m)$. A representation $(A,B)$ is semi-stable if for two linear subspaces $U, V \sub \C^m$ satisfying $AU \sub V$ and $BV \sub U$, it follows that $\dim U \leq \dim V$. It is easy to see that this is equivalent to requiring $A$ to be invertible. Thus
	$$
		R_{m \rla m}^{\sst} = \Gl_m \times M_{m}.
	$$	
	For a dimension vector $d = (m \rla n)$ of slope neither $1,0$ nor $-1$ (this means $m \neq n$ and both non-zero) there are no semi-stable representations: Let $(A,B)$ be a representation of dimension vector $d$. We analyze the case $m > n$ (for $m < n$ we may argue in a similar way). The matrix $A$ must have a non-trivial kernel whence $(\ker A, 0)$ provides a subrepresentation of $(A,B)$ of slope $1 > \slope(d)$.

	We described the CoHa of $\tildeAOne$ in Example \ref{exC_first}. We see at once that the subalgebras $\HH^{1} = \bigoplus_n \HH_{n \rla 0}$ and $\HH^{-1} = \bigoplus_n \HH_{0 \rla n}$ are both isomorphic to the CoHa of the quiver which consists of a single point (without arrows). This isn't surprising as $[R_{n \rla 0}/G_{n \rla 0}] = [R_{0 \rla n}/G_{0 \rla n}] = [\pt/\Gl_n]$. But the unique representation of dimension vector $n \rla 0$ (resp. $0 \rla n$) is semi-stable, thus
	$$
		\HH^{1} = \HH^{\sst,1} \cong \HH(\ptquiv) \cong \HH^{\sst,-1} = \HH^{-1}.
	$$
	Now, let $d = (m \rla m)$ be a dimension vector of slope $0$. The morphism $M_{m} \to R_{m \rla m}$ which sends a matrix $B$ to $(E,B)$---here $E$ is the $(m \times m)$-unit matrix---is $\Gl_m$-equivariant when viewing $\Gl_m$ as a subgroup of $G_{m \rla m} = \Gl_m \times \Gl_m$ via the diagonal embedding. By the above considerations, this map factors through $R_{m \rla m}^{\sst} = \Gl_m \times M_{m}$ and induces an isomorphism $[M_{m}/\Gl_m] \to [R_{m \rla m}^{\sst}/G_{m \rla m}]$ \label{secondEx2} of moduli stacks. Passing to equivariant Chow rings (or equivariant cohomology rings; in this case, it doesn't make any difference), we obtain homomorphisms
	\begin{center}
		\begin{tikzpicture}[description/.style={fill=white,inner sep=2pt}]
			\matrix(m)[matrix of math nodes, row sep=2em, column sep=2em, text height=1.5ex, text depth=0.25ex]
			{
				A_{G_{m \rla m}}^*(R_{m \rla m}) &	A_{G_{m \rla m}}^*(R_{m \rla m}^{\sst}) & A_{\Gl_m}^*(M_{m}) \\
				\Q[x_1,\ldots,x_m,y_1,\ldots,y_m]^{S_m \times S_m}	&& \Q[t_1,\ldots,t_m]^{S_m}. \\
			};
			\path[->, font=\scriptsize]
			(m-1-1) edge (m-1-2)
			(m-1-2) edge node[auto]{$\cong$} (m-1-3)
			(m-2-1) edge (m-2-3);
			\draw[-] ($(m-1-1.south) + (-.1em,0)$) -- ($(m-2-1.north) + (-.1em,.2em)$);
			\draw[-] ($(m-1-1.south) + (.1em,0)$) -- ($(m-2-1.north) + (.1em,.2em)$);
			\draw[-] ($(m-1-3.south) + (-.1em,0)$) -- ($(m-2-3.north) + (-.1em,.2em)$);
			\draw[-] ($(m-1-3.south) + (.1em,0)$) -- ($(m-2-3.north) + (.1em,.2em)$);
		\end{tikzpicture}
	\end{center}
	The lower map is given by sending $x_i$ and $y_i$ to $t_i$. Observing that the Hecke correspondences are compatible with these maps, we have identified the semi-stable ChowHa/CoHa of $Q$ of slope $0$ with the CoHa of the Jordan quiver $\loopquiv$, i.e.
	$$
		\HH^{0} \onto \HH^{\sst,0} \cong \HH(\loopquiv).
	$$
	But $\HH(\loopquiv)$ is isomorphic to the symmetric algebra over $\HH_1(\loopquiv) = \Q[t]$ (as seen in \ref{exB}), i.e.\ a polynomial algebra in infinitely many variables $\phi_0, \phi_1, \phi_2, \ldots$ with $\phi_i(t) = t^i$. A surjection to a polynomial algebra allows a section (by choosing inverse images of the variables). So here, sending $\phi_i$ to $x^i \in \HH_{1 \rla 1}(\tildeAOne) = \Q[x,y]$ yields a section $\Psi^0$ of the surjection $\HH^0(\tildeAOne) \onto \HH(\loopquiv)$.

	\begin{rem*}
		Note that $\Psi^0$ can also be obtained geometrically as the pull-back of the morphism $R_{m \rla m} \to M_{m}$ mapping $(A,B)$ to $BA$ which is $G_{m \rla m}$-equivariant via the projection $G_{m \rla m} = \Gl_m \times \Gl_m \to \Gl_m$ to the first factor.
	\end{rem*}

	We obtain a homomorphism of algebras
	$
		\Psi: \HH(\ptquiv) \otimes \HH(\loopquiv) \otimes \HH(\ptquiv) \to \HH(\tildeAOne)
	$
	by mapping $\Psi(f \otimes g \otimes h) = f * \Psi^0(g) * h$. The tensor product on the left-hand side is to be understood as a super-commutative tensor product which explains the algebra structure. We show:

	\begin{prop} \label{tensorProd}
		The map $\Psi: \HH(\ptquiv) \otimes \HH(\loopquiv) \otimes \HH(\ptquiv) \to \HH(\tildeAOne)$ is an isomorphism of algebras.
	\end{prop}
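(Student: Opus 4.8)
The plan is to show, in turn, that $\Psi$ is a well-defined homomorphism of $(\Gamma\times\Z)$-graded algebras, that it is surjective, and that it is then bijective because source and target have equal graded dimensions. Since $\AA(\tildeAOne)\cong\HH(\tildeAOne)$ as $(\Gamma\times\Z)$-graded algebras and, for this quiver and this stability, the cycle map also induces isomorphisms $\AA^{\sst,\mu}\cong\HH^{\sst,\mu}$ (cf.\ the computation preceding the statement), it is enough to prove the analogous assertion for the ChowHa, i.e.\ that $\Psi\colon\AA(\ptquiv)\otimes\AA(\loopquiv)\otimes\AA(\ptquiv)\to\AA(\tildeAOne)$ is an isomorphism, where now $\Psi^0$ denotes the section of $\AA^0\onto\AA^{\sst,0}$.

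Well-definedness is the easy part. As noted in Example \ref{exC_first}, the multiplication $\ast$ of $\AA(\tildeAOne)$ is \emph{already} super-commutative, because $\chi(d,e)$ and $\epsilon(d)\epsilon(e)$ always have the same parity. The inclusions $\AA(\ptquiv)\cong\AA^1\into\AA(\tildeAOne)$, $\AA(\ptquiv)\cong\AA^{-1}\into\AA(\tildeAOne)$ and the section $\Psi^0$ are parity-preserving and compatible with the $\Z$-grading (a degree-$\ell$ polynomial in $\AA_a(\ptquiv)$ sits in $\Z$-degree $2\ell+a^2$, as does its image in $\AA_{a\rla 0}(\tildeAOne)$, and $\Psi^0$ sends $\phi_i\in\AA_{(1,2i)}(\loopquiv)$ to $x^i\in\AA_{(1\rla 1,\,2i)}(\tildeAOne)$). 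Hence rewriting $\Psi(f_1\otimes g_1\otimes h_1)\ast\Psi(f_2\otimes g_2\otimes h_2)$ into the normal form $(f_1\ast f_2)\ast\bigl(\Psi^0(g_1)\ast\Psi^0(g_2)\bigr)\ast(h_1\ast h_2)$ only requires commuting factors past one another, which by super-commutativity of $\ast$ produces exactly the Koszul signs defining the super-commutative tensor product on the left; as $\Psi^0$ and the two inclusions are algebra maps, $\Psi$ is a homomorphism of graded algebras.

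For surjectivity I will first check that $\tildeAOne$ with $\theta=(1,-1)$ satisfies the hypothesis of Lemma \ref{kack_lemma}. The only dimension vectors admitting a semistable representation are $a\rla 0$, $b\rla b$, $0\rla c$ (of slopes $1,0,-1$), so the HN types of $m\rla n$ are indexed by $b\in\{0,\dots,\min(m,n)\}$, the type being $\tau_b=(m{-}b\rla 0,\ b\rla b,\ 0\rla n{-}b)$ with vanishing entries dropped. Using that the maximal subrepresentation of $(A,B)$ of dimension $\ast\rla 0$ is $\ker A\rla 0$, and that every representation of dimension $b\rla n$ contains a subrepresentation of dimension $b\rla b$, one checks that a representation of HN type $\tau_{b'}$ admits a filtration with subquotients of type $\tau_b$ precisely when $b'\le b$; hence $\overline{R^{\HN}_{\tau_b}}=\bigsqcup_{b'\le b}R^{\HN}_{\tau_{b'}}$ is a union of HN strata and Lemma \ref{kack_lemma} applies. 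Then I induct on $m+n$: the localization sequence $\AA^G_\ast(R^{\unst}_{m\rla n})\to\AA_{m\rla n}\to\AA^{\sst}_{m\rla n}\to 0$ together with Lemma \ref{kack_lemma} writes $\AA_{m\rla n}$ as the sum of a lift of $\AA^{\sst}_{m\rla n}$ and of the images of the products $\AA_{m{-}b\rla 0}\ast\AA_{b\rla b}\ast\AA_{0\rla n{-}b}$ attached to the HN types $\tau_b$ of length $\ge 2$; for every such $b$ one has $2b<m+n$. The lift lies in $\operatorname{im}\Psi$: it is $0$ for $m\ne n$, and equals $\Psi^0(\AA^{\sst}_{m\rla m})\sub\operatorname{im}\Psi$ for $m=n$. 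For the products, the inductive hypothesis gives $\AA_{b\rla b}\sub\operatorname{im}\Psi$, i.e.\ $\AA_{b\rla b}$ is a sum of terms $\AA^1\ast\Psi^0(\AA^{\sst,0})\ast\AA^{-1}$; substituting and using that $\AA^1$ and $\AA^{-1}$ are subalgebras (so $\AA^1\ast\AA^1\sub\AA^1$ and $\AA^{-1}\ast\AA^{-1}\sub\AA^{-1}$) shows each such product lies in $\AA^1\ast\Psi^0(\AA^{\sst,0})\ast\AA^{-1}=\operatorname{im}\Psi$. Hence $\AA_{m\rla n}\sub\operatorname{im}\Psi$.

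Finally, for bijectivity: source and target are $(\Gamma\times\Z)$-graded with finite-dimensional homogeneous pieces and $\Psi$ respects the grading (with $\AA_a(\ptquiv)$ in degree $a\rla 0$, $\AA_b(\loopquiv)$ in degree $b\rla b$, $\AA_c(\ptquiv)$ in degree $0\rla c$); since for each of $\ptquiv,\loopquiv,\tildeAOne$ all $\Z$-degrees occurring in a fixed $\Gamma$-degree $d$ have parity $\epsilon(d)$, equality of graded dimensions is equivalent to the generating-series identity
$$
	P_{\tildeAOne}(q,s,t)=P_{\ptquiv}(q,s)\cdot P_{\loopquiv}(q,st)\cdot P_{\ptquiv}(q,t)
$$
(the middle argument is $st$ because $\Psi^0$ lands in diagonal degrees $b\rla b$). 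Writing $(q)_k=\prod_{\nu=1}^k(1-q^\nu)$, inserting the explicit series of Section 1, and comparing coefficients of $s^mt^n$—using $(m-b)^2+(n-b)^2-(m-n)^2=2(m-b)(n-b)$—reduces this to the elementary $q$-identity of Durfee-rectangle type
$$
	\sum_{b=0}^{\min(m,n)}q^{(m-b)(n-b)}\,\frac{(q)_m\,(q)_n}{(q)_{m-b}\,(q)_b\,(q)_{n-b}}=1,
$$
which follows from the $q$-binomial theorem. Together with surjectivity this makes $\Psi$ bijective, hence an isomorphism of graded algebras. The routine parts are well-definedness and the $q$-identity; the main obstacle is the surjectivity step—verifying the HN-closure hypothesis of Lemma \ref{kack_lemma} for $\tildeAOne$, and then running the induction while keeping track that, after substituting the inductive description of $\AA_{b\rla b}$, the products stay inside $\operatorname{im}\Psi$.
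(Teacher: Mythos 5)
Your proof is correct and follows essentially the same route as the paper: surjectivity via Lemma \ref{kack_lemma} applied to the rank stratification $R^{\HN}_{d^*(k)}=\{\rk A=k\}$, whose closures are nested and hence unions of strata (your explicit induction on $m+n$ just spells out what the paper leaves implicit), followed by a comparison of generating series. The only divergence is the final step: the paper deduces the series identity from Reineke's integration-map factorization (Theorem \ref{reineke}), whereas you verify the equivalent $q$-identity $\sum_b q^{(m-b)(n-b)}\frac{(q)_m(q)_n}{(q)_{m-b}(q)_b(q)_{n-b}}=1$ by hand, which is a self-contained but mathematically identical dimension count.
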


	\begin{proof}
		We prove that $\Psi$ is surjective. It is clear that $\HH_{m \rla 0}$ and $\HH_{0 \rla n}$ are contained in $\im \Psi$. Let $d = (m \rla n)$ be a dimension vector with $m \neq 0$ and $n \neq 0$. The possible HN types (cf.\ page \pageref{HN}) of $d$ are of the form $d^*(k) = (m-k \rla 0, k \rla k, 0 \rla n-k)$ with $k = 0,\ldots, \min\{m,n\}$ (for $k=0$ or $k = \min\{m,n\}$, the dimension vectors in $d^*(k)$ which are $0$ are understood to be omitted). The associated HN stratum is
		$$
			R_{d^*(k)}^{\HN} = \{ (A,B) \mid \rk A = k \}
		$$
		whence the closures of the HN strata give a filtration $R_d = R_{d^*(m)} \supseteq R_{d^*(m-1)} \supseteq \ldots \supseteq R_{d^*(0)}$. Lemma \ref{kack_lemma} therefore applies here, yielding that
		$$
			\HH_{m \rla n} = \begin{cases}
						\sum_{r \geq 2} \sum_{d^* \in \HN_r(m \rla n)} \HH_{d^1} * \ldots * \HH_{d^r} & \text{ for } m \neq n \\
						\im \Psi^0 \oplus \sum_{r \geq 2} \sum_{d^* \in \HN_r(m \rla m)} \HH_{d^1} * \ldots * \HH_{d^r} & \text{ for } m = n.
			                 \end{cases}
		$$
		In order to show that $\Psi$ is moreover an isomorphism of algebras, it remains to verify that the generating series of both algebras agree. We consider the elements $\one, \one^{\sst,1}, \one^{\sst,0}$ and $\one^{\sst,-1}$ of $H((\tildeAOne))$. We have shown that the following identities in the power series algebra $\Q(q^{1/2})[[x,y]]$ hold:
		\begin{align*}
			\int \one &= P_{\tildeAOne}(q^{-1},x,y) &
			\int \one^{\sst,1} &= P_{\raisebox{-.2em}\bull}(q^{-1},x) \\
			\int \one^{\sst,0} &= P_{\loopquiv}(q^{-1},xy) &
			\int \one^{\sst,-1} &= P_{\raisebox{-.2em}\bull}(q^{-1},y).
		\end{align*}
		Thus, we deduce from Reineke's theorem that $\Psi$ is in fact an isomorphism.
	\end{proof}

	Put $\psi_i^+ \in \HH_{1 \rla 0} = \Q[x]$ to be $\psi_i^+(x) = x^i$, and define $\psi_i^- \in \HH_{0 \rla 1} = \Q[y]$ by $\psi_i^-(y) = y^i$. Abusing notation, we denote by $\phi_i \in \HH_{1 \rla 1}$ the function $\phi_i(x,y) = x^i$. From Proposition \ref{tensorProd}, we deduce:

	\begin{cor} \label{freeSuper}
		The CoHa of $\tildeAOne$ is the free super-commutative algebra over the graded (super\nobreakdash-)vector space $V$ spanned by
		$$
			\psi_0^+, \psi_1^+, \ldots, \phi_0, \phi_1, \ldots, \psi_0^-, \psi_1^-, \ldots
		$$
		and as a graded vector space, $V$ is isomorphic to $V^{\mathrm{prim}} \otimes \Q[z]$ with $z$ living in bidegree $(0 \rla 0,2)$ and $V^{\mathrm{prim}} = \Q \cdot \psi_0^+ \oplus \Q \cdot \phi_0 \oplus \Q \cdot \psi_0^-$.
	\end{cor}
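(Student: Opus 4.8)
The plan is to deduce the corollary formally from the isomorphism $\Psi$ of Proposition \ref{tensorProd} together with the explicit descriptions of $\HH(\ptquiv)$ and $\HH(\loopquiv)$ obtained in Examples \ref{exA} and \ref{exB}; no further geometry is needed.

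First I would record the two building blocks. By Example \ref{exA}, $\HH(\ptquiv)$ is the free super-commutative algebra over the purely odd graded vector space $W'$ with basis $\psi_0,\psi_1,\ldots$ (namely the exterior algebra $\bigwedge(\psi_0,\psi_1,\ldots)$); by Example \ref{exB}, $\HH(\loopquiv)$ is the free super-commutative algebra over the purely even graded vector space $W''$ with basis $\phi_0,\phi_1,\ldots$ (namely the polynomial algebra $\Q[\phi_0,\phi_1,\ldots]$).

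Next I would invoke the standard fact that for $(\Z/2)$-graded vector spaces the super-commutative tensor product of free super-commutative algebras satisfies $\mathrm{Free}(A)\otimes\mathrm{Free}(B)\cong\mathrm{Free}(A\oplus B)$. Applying this twice, $\HH(\ptquiv)\otimes\HH(\loopquiv)\otimes\HH(\ptquiv)$ is the free super-commutative algebra over $W'_{(1)}\oplus W''\oplus W'_{(2)}$, the subscripts distinguishing the two copies of $W'$. Since the product $*$ and the section $\Psi^0$ both preserve the refined $(\Gamma\times\Z)$-grading and $\Psi$ is built from them, $\Psi$ is an isomorphism of $(\Gamma\times\Z)$-graded super-commutative algebras; hence $\HH(\tildeAOne)$ is the free super-commutative algebra over $V:=\Psi\bigl(W'_{(1)}\oplus W''\oplus W'_{(2)}\bigr)$. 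Unwinding the definitions, $\Psi$ sends the generator $\psi_i$ of the first (resp.\ last) tensor factor to $\psi_i^+\in\HH_{1\rla 0}$ (resp.\ $\psi_i^-\in\HH_{0\rla 1}$) and the generator $\phi_i$ of the middle factor to $\Psi^0(\phi_i)=\phi_i\in\HH_{1\rla 1}$; these are linearly independent since $\Psi$ is injective, so $V$ is exactly the span of $\psi_0^+,\psi_1^+,\ldots,\phi_0,\phi_1,\ldots,\psi_0^-,\psi_1^-,\ldots$.

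Finally I would pin down the bigrading of $V$ using the Euler form of $\tildeAOne$ recalled in Example \ref{exC_first}, $\chi(m\rla n,r\rla s)=mr+ns-ms-nr$: from $\chi(1\rla 0,1\rla 0)=\chi(0\rla 1,0\rla 1)=1$ and $\chi(1\rla 1,1\rla 1)=0$ it follows that the polynomial-degree-$i$ elements $\psi_i^+$, $\psi_i^-$, $\phi_i$ lie in bidegrees $(1\rla 0,\,2i+1)$, $(0\rla 1,\,2i+1)$, $(1\rla 1,\,2i)$ respectively. Thus the bidegree of $\psi_i^{\pm}$ (resp.\ $\phi_i$) is that of $\psi_0^{\pm}$ (resp.\ $\phi_0$) shifted by $i\cdot(0\rla 0,2)$, so as a bigraded vector space $V\cong V^{\mathrm{prim}}\otimes\Q[z]$ with $z$ of bidegree $(0\rla 0,2)$ and $V^{\mathrm{prim}}=\Q\cdot\psi_0^+\oplus\Q\cdot\phi_0\oplus\Q\cdot\psi_0^-$. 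The substantive work is all contained in Proposition \ref{tensorProd}; the only point that needs care here is verifying that $\Psi$ is compatible with the refined grading and the sign-twisted multiplication, so that the tensor-product decomposition of free super-commutative algebras may legitimately be transported along $\Psi$.
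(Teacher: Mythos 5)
Your proposal is correct and follows exactly the route the paper intends: the corollary is stated as an immediate consequence of Proposition \ref{tensorProd} together with the identifications of $\HH(\ptquiv)$ and $\HH(\loopquiv)$ as free super-commutative algebras from Examples \ref{exA} and \ref{exB}, and your write-up simply makes explicit the standard fact that a super-commutative tensor product of free super-commutative algebras is free over the direct sum of the generating spaces, plus the bidegree bookkeeping via the Euler form. The one point you flag as needing care---compatibility of $\Psi$ with the refined grading and the sign twist---is already settled in Example \ref{exC_first}, where it is observed that for $\tildeAOne$ the untwisted product $*$ is itself super-commutative.
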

	
	\section{Modules from Smooth Models}

	\newcommand{\fr}{\mathrm{fr}}

	We are going to construct modules over $\HH^{\sst,\mu}$ (resp. $\AA^{\sst,\mu}$) which come from smooth models. These modules were introduced, in the more general context of framed stable objects, in Section 4 of Soibelman's paper \cite{Soibelman:14}.

	\subsection*{Smooth Models}

	We assume, again, a quiver $Q$, a stability condition $\theta$ of $Q$, and a rational number $\mu$ to be fixed. Let $n$ be a dimension vector for $Q$. It will be the framing datum. Construct a new quiver $\hat{Q}(n)$ by adding an extra vertex $\infty$ to the vertexes of $Q$ and having $n_i$ arrows pointing from $\infty$ to $i$ for every $i \in Q_0$. This framed quiver depends on $n$; however, when the dependency on $n$ can be neglected, we will drop it in the notation for the sake of brevity. Let $\epsilon > 0$ be a rational number. Define a stability condition $\hat{\theta} = \hat{\theta}_\epsilon$ for $\hat{Q}$ by letting $\hat{\theta}(i) = \theta(i)$ for all $i \in Q_0$ and
	$$
		\hat{\theta}(\infty) = \mu + \epsilon.
	$$
	Let $d$ be a dimension vector of $Q$ and let $\hat{d}$ be the dimension vector of $\hat{Q}$ with $\hat{d}_i = d_i$ and $\hat{d}_\infty = 1$. Suppose that $\slope(d) = \mu$. A representation $(M,f)$ of $\hat{Q}$ of dimension vector $\hat{d}$ consists of a representation $M$ of $Q$ and linear maps $f_i: \C^{n_i} \to \C^{d_i}$. Engel--Reineke have shown:

	\begin{prop}[{\cite[Proposition 3.3]{ER:09}}] \label{engel-reineke}
		If $\epsilon$ is small enough then, for a representation $(M,f)$ of $\hat{Q}$, the following are equivalent:
		\begin{enumerate}
			\item $(M,f)$ is $\hat{\theta}$-semi-stable.
			\item $(M,f)$ is $\hat{\theta}$-stable.
			\item $M$ is $\theta$-semi-stable and $\slope(M') < \slope(M)$ for every proper subrepresentation $M'$ of $M$ which contains the image of $f$.
		\end{enumerate}
	\end{prop}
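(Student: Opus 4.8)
The plan is to unwind $\hat{\theta}$-semistability of a framed representation $(M,f)$ into statements about subrepresentations of $M$, exploiting that $\hat{d}_\infty = 1$ leaves essentially no room at the extra vertex. Write $|e| = \sum_{i \in Q_0} e_i$ for a dimension vector $e$ of $Q$. First I would observe that a subrepresentation $\hat{N}$ of $(M,f)$ is of exactly one of two kinds: either $\hat{N} = N$ for a subrepresentation $N \sub M$ placed with a zero at the vertex $\infty$, or $\hat{N} = (N,f)$ with $\im(f) \sub N \sub M$ and a full copy of $\C$ at $\infty$ (the second kind forcing $\im(f) \sub N$ so that the framing arrows restrict). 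Since $\slope(d) = \mu$ we have $\theta(\dimvect M) = \mu\,|M|$, and a direct computation of the $\hat{\theta}$-slopes yields
$$
	\slope_{\hat{\theta}}(M,f) = \mu + \frac{\epsilon}{|M| + 1}, \qquad
	\slope_{\hat{\theta}}(\hat{N}) = \begin{cases} \slope(N) & \text{if } \hat{N} \text{ is of the first kind}, \\ \dfrac{\theta(\dimvect N) + \mu + \epsilon}{|N| + 1} & \text{if } \hat{N} \text{ is of the second kind}. \end{cases}
$$

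Next I would translate the defining inequalities $\slope_{\hat{\theta}}(\hat{N}) \leq \slope_{\hat{\theta}}(M,f)$. For $\hat{N}$ of the first kind this reads $\slope(N) \leq \mu + \epsilon/(|M|+1)$; for $\hat{N} = (N,f)$ of the second kind, clearing denominators, it becomes
$$
	\theta(\dimvect N) - \mu|N| \ \leq\ \epsilon\cdot\frac{|N| - |M|}{|M| + 1},
$$
whose right-hand side is $0$ for $N = M$ and strictly negative, but $> -\epsilon$, for $N \subsetneq M$. The quantitative heart of the matter is finiteness: only finitely many dimension vectors $e$ satisfy $e \leq d$, so I may fix $\delta > 0$ with $\delta < |\slope(e) - \mu|$ for every such $e \neq 0$ with $\slope(e) \neq \mu$, and then fix $\epsilon$ with $0 < \epsilon < \delta$. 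With this choice $\epsilon/(|M|+1) < \epsilon < \delta$ and, since $\theta(\dimvect N) - \mu|N| = |N|\cdot(\slope(N) - \mu)$, each $\epsilon$-perturbed comparison above is equivalent to its $\epsilon = 0$ counterpart. Ranging over all $N \sub M$, the first-kind inequalities thereby collapse to ``$\slope(N) \leq \mu = \slope(M)$'', i.e.\ $M$ is $\theta$-semistable, while the second-kind ones collapse to ``$\slope(N) < \mu$ for every proper subrepresentation $N$ with $\im(f) \sub N \subsetneq M$'' (the case $N = M$ being automatic). This is exactly the equivalence $(1)\Leftrightarrow(3)$.

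Finally, since stability trivially implies semistability, it remains to prove $(1)\Rightarrow(2)$. Assume $(M,f)$ is $\hat{\theta}$-semistable and let $\hat{N}$ be a subrepresentation with $0 \neq \hat{N} \neq (M,f)$. If $\hat{N}$ is of the first kind, then $\slope_{\hat{\theta}}(\hat{N}) = \slope(N) \leq \mu < \mu + \epsilon/(|M|+1) = \slope_{\hat{\theta}}(M,f)$, a strict inequality. If $\hat{N} = (N,f)$ is of the second kind with $N \subsetneq M$, then by $(3)$ we have $\theta(\dimvect N) - \mu|N| < 0$, hence this quantity is $\leq -\delta$, while the right-hand side of the last display is $> -\epsilon > -\delta$; so again $\slope_{\hat{\theta}}(\hat{N}) < \slope_{\hat{\theta}}(M,f)$. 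Hence $(M,f)$ is $\hat{\theta}$-stable.

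I expect the only real difficulty to be the bookkeeping around the threshold for $\epsilon$---extracting the uniform gap $\delta$ and checking that the comparisons which remain equalities at $\epsilon = 0$ occur only for $\hat{N} \in \{0,(M,f)\}$ while all others turn strict---together with the degenerate case $f = 0$, in which $N = 0$ is itself a proper subrepresentation containing $\im(f)$; this matches the fact that the subrepresentation supported only at $\infty$ destabilizes $(M,0)$ as soon as $M \neq 0$, so that $(3)$ must (and does) fail there, once one reads $\slope$ of the zero representation with the appropriate convention.
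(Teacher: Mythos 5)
The paper does not prove this statement; it is imported verbatim from Engel--Reineke \cite[Proposition 3.3]{ER:09}. Your argument is correct and is essentially the standard proof of that result: splitting subrepresentations of $(M,f)$ according to whether the component at $\infty$ is $0$ or $\C$, clearing denominators in the slope inequalities, and using the finiteness of dimension vectors $\leq d$ to choose $\epsilon$ below the uniform gap $\delta$ so that all perturbed comparisons other than the trivial ones become strict. The one point deserving care --- the case $f=0$, where the subrepresentation supported only at $\infty$ destabilizes and condition (3) must be read with the convention that $M'=0$ counts as a proper subrepresentation containing $\operatorname{im} f$ --- you identify and resolve correctly.
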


	For every dimension vector $d$ of $Q$ of slope $\mu$, choose an $\epsilon = \epsilon_d > 0$ such that the above equivalences are valid \emph{for all sub-dimension vectors} $d' \leq d$. Denote then
	\begin{align*}
		&\hat{R}_d(Q,n) = R_{\hat{d}}\big( \hat{Q}(n) \big) = R_d(Q) \oplus \overbrace{\bigoplus_{i \in Q_0} \Hom(\C^{n_i},\C^{d_i})}^{=: F_d(n)} \\ 
		&\hat{R}_d^{\theta-\st}(Q,n) = R_{\hat{d}}^{\hat{\theta}_{\epsilon_d}-\mathrm{(s)st}}\big( \hat{Q}(n) \big).
	\end{align*}
	Although it is actually $\hat{\theta}$-stability, we write $\smash{\hat{R}_d^{\theta-\st}(Q,n)}$ instead. On $\hat{R}_d = \hat{R}_d(Q,n)$, there is a natural action of the group $G_d$ (the usual action on $R_d$ and left-multiplication on the framing). The induced action on $\hat{R}_d^{\st} = \hat{R}_d^{\theta-\st}(Q,n)$ is free. The geometric quotient 
	$$
		\hat{M}_d^{\theta-\st}(Q,n) := \hat{R}_d^{\theta-\st}(Q,n)/G_d
	$$
	(which exists by Mumford's GIT) is smooth and the natural map $\hat{M}_d^{\theta-\st}(Q,n) = \hat{M}_d^{\st} \to M_d^{\sst} = R_d^{\sst}/PG_d$ is projective.

	\begin{defn*}
		The variety $\hat{M}_d^{\theta-\st}(Q,n)$ is called a \textbf{smooth model} for $M_d^{\theta-\sst}(Q)$.
	\end{defn*}

	\subsection*{Modules over the Semi-Stable CoHa}

	Like we did before, whenever no confusion can arise we will suppress the dependency on $Q$, $\theta$ and $n$ in the notation. Let $d$ and $e$ be dimension vectors of slope $\mu$. We define, in accordance to the notation from the first section,
	\begin{align*}
		\hat{\uppertri} &:= R_{\destar}(Q) \oplus F_{d+e}(n) \\
		\hat{\uppertri}{}^{\st} &:= \hat{\uppertri} \cap \hat{R}_{d+e}^{\st}
	\end{align*}
	For $(M,f) = \smash{\Big( \big( \begin{smallmatrix} M' & * \\ & M'' \end{smallmatrix} \big), \big( \begin{smallmatrix} f' \\ f'' \end{smallmatrix} \big) \Big)}$ belonging to $\hat{\uppertri}{}^{\st}$, let $U''$ be a proper subrepresentation of $M''$ which contains the image of the linear map $f''$. Consider the short exact sequence
	$$
		0 \to M' \to M \to M'' \to 0
	$$
	and form the pull-back $U = M \times_{M''} U''$ which is a proper subrepresentation of $M$. We obtain a short exact sequence $0 \to M' \to U \to U'' \to 0$ and we see that $\im f$ is contained in $U$. As $(M,f)$ is (semi-)stable, $\slope(U) < \slope(M) = \mu$ and thus, $\slope(M'') = \mu = \slope(M') > \slope(U'')$. Hence, $(M'',f'')$ is also $\hat{\theta}_{\epsilon_{d+e}}$-stable (which is equal to $\hat{\theta}_{\epsilon_{e}}$-stability for a representation of dimension vector $e$). This yields maps
	$$
		R_d^{\sst} \times \hat{R}_{e}^{\st} \ot \hat{\uppertri}{}^{\st} \to \hat{R}_{d+e}^{\st}
	$$
	the first being flat, the latter a closed embedding. Note that the varieties and maps are independent of the choices of the $\epsilon$'s. Putting
	\begin{align*}
		&\hat{\HH}^{\theta-\st,\mu}(Q,n) = \bigoplus_{d \in \Gamma_\mu} \hat{\HH}_d^{\theta-\st}(Q,n) \text{ with} \\
		&\hat{\HH}_d^{\theta-\st}(Q,n) = H^*_{G_d}(\hat{R}^{\theta-\st}_d(Q,n)) = H^*(\hat{M}^{\theta-\st}_d(Q,n)),
	\end{align*}
	we have declared an $\HH^{\sst,\mu}$-module structure on $\hat{\HH}^{\st,\mu}$. It is graded by $\Gamma_\mu$. There are natural morphisms $R_d^{\sst} \ot \smash{\hat{R}_{d}^{\st}}$ which give rise to a map 
	$$
		\HH^{\sst,\mu} \to \hat{\HH}^{\st,\mu}.
	$$
	This map is $\HH^{\sst,\mu}$-linear (this can be shown in exactly the same way as \cite[Proposition 3.3]{Franzen:13:NCHilb_Loop}).

	\subsection*{Modules over the Semi-Stable ChowHa}

	As for the CoHa and the semi-stable CoHa, we can use intersection theory to construct a module $\hat{\AA}^{\st,\mu}$ over the semi-stable ChowHa arising from smooth models. We employ the same Hecke correspondences as in the cohomological version. The natural map $\AA^{\sst,\mu} \to \hat{\AA}^{\st,\mu}$ is $\AA^{\sst,\mu}$-linear. We prove:

	\begin{thm} \label{main_thm}
		The homomorphism $\AA^{\theta-\sst,\mu}(Q) \to \hat{\AA}^{\theta-\st,\mu}(Q,n)$ is surjective and the kernel equals
		$$
			\sum_{p,q} \AA^{\sst}_p * (e_q^n \cup \AA^{\sst}_q),
		$$
		the sum running over all $p,q$ with $\slope(p) = \slope(q) = \mu$ and $q \neq 0$. Here, $\cup$ denotes the intersection product in $\AA^{\sst}_q = A^*_{G_q}(R_q^{\sst})$ and $e_q^n$ is the product of equivariant Chern classes
		$$
			e_q^n = \prod_i c_{q_i}^{G_q}(R_q^{\sst} \times \C^{q_i} \to R_q^{\sst})^{n_i},
		$$
		where $G_q$ is understood to act on $\C^{q_i}$ by left multiplication of the $i$\textsuperscript{th} factor.
	\end{thm}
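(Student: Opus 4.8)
The plan is to adapt the proof of Lemma~\ref{kack_lemma} to a \emph{framed} Harder--Narasimhan stratification relative to the unframed semi-stable locus. Fix $d\in\Gamma_\mu$ and write $\hat{R}_d^{\circ}:=R_d^{\sst}\oplus F_d(n)$. Since a $\hat{\theta}$-stable representation $(M,f)$ has $M$ $\theta$-semi-stable (Proposition~\ref{engel-reineke}), $\hat{R}_d^{\st}$ is an open subset of $\hat{R}_d^{\circ}$, and the forgetful morphism $\hat{R}_d^{\st}\to R_d^{\sst}$ factors through the trivial vector bundle $\hat{R}_d^{\circ}\to R_d^{\sst}$. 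By homotopy invariance $\AA_d^{\sst}=A_{G_d}^*(R_d^{\sst})\cong A_{G_d}^*(\hat{R}_d^{\circ})$, and under this identification the map of the theorem becomes the restriction $A_{G_d}^*(\hat{R}_d^{\circ})\to A_{G_d}^*(\hat{R}_d^{\st})$ to an open subset. Surjectivity is then immediate from the localization sequence
$$
A_*^{G_d}\big(\hat{R}_d^{\circ}\setminus\hat{R}_d^{\st}\big)\longrightarrow A_*^{G_d}(\hat{R}_d^{\circ})\longrightarrow A_*^{G_d}(\hat{R}_d^{\st})\longrightarrow 0,
$$
and the kernel in question is the image of the first arrow.

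To describe that image I would stratify $\hat{R}_d^{\circ}$. If $(M,f)\in\hat{R}_d^{\circ}$ has $\im f\neq 0$, then the sub-representations $U\subseteq M$ with $\slope(U)=\mu$ and $\im f\subseteq U$ form a non-empty family closed under intersection (for two such $U_1,U_2$ the quotient $M/U_2$ is semi-stable of slope $\mu$, so $U_1/(U_1\cap U_2)\hookrightarrow M/U_2$ has slope $\leq\mu$, which forces $\slope(U_1\cap U_2)=\mu$), hence has a unique minimal member $U(M,f)$; it is proper exactly when $(M,f)\notin\hat{R}_d^{\st}$. Put $p(M,f)=\dimvect U(M,f)$, and $p(M,f)=0$ if $\im f=0$. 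This decomposes $\hat{R}_d^{\circ}$ into locally closed strata $S_p$, indexed by $p\in\Gamma_\mu$ with $p\leq d$, where $S_d=\hat{R}_d^{\st}$. For $q:=d-p$, let $P_{p,q}=\big(\begin{smallmatrix}G_p & * \\ & G_q\end{smallmatrix}\big)$ be the corresponding parabolic subgroup of $G_d$, with Levi $L_{p,q}=G_p\times G_q$, let $\hat{W}_p$ be the space of block-upper-triangular representations with both diagonal blocks $\theta$-semi-stable equipped with a framing of the shape $\fpo$, and let $\hat{W}_p^{o}\subseteq\hat{W}_p$ be the open locus on which the induced framed representation on the first block is $\hat{\theta}$-stable. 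Just as uniqueness of the HN filtration is used in Lemma~\ref{kack_lemma}, the minimality of $U(M,f)$ gives an isomorphism $\hat{W}_p^{o}\times^{P_{p,q}}G_d\cong S_p$. The geometric ingredient I would invoke --- the framed counterpart of the hypothesis of Lemma~\ref{kack_lemma}, asserted in the introduction to hold without further hypotheses --- is that the closure of $S_p$ inside $\hat{R}_d^{\circ}$ equals $\bigcup_{p'\leq p}S_{p'}$; in particular $\overline{S_p}\subseteq\hat{R}_d^{\circ}\setminus\hat{R}_d^{\st}$ whenever $p\neq d$. The substantive points are precisely these two properties of the framed stratification (uniqueness of the minimal destabilizing sub-representation and the closure statement); once they are available, the rest of the argument copies Lemma~\ref{kack_lemma}.

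With this setup in place, the homological part is formally identical to that of Lemma~\ref{kack_lemma}. Combining the parabolic-induction isomorphism $A^*_{P_{p,q}}(\hat{W}_p)\cong A_*^{G_d}(\hat{W}_p\times^{P_{p,q}}G_d)$, the vanishing $A_*(\emptyset)=0$ on the minimal strata, the exact sequence of \cite[Example~1.8.1]{Fulton:98} attached to the proper morphism $\hat{W}_p\times^{P_{p,q}}G_d\to\overline{S_p}$ (which is an isomorphism over $S_p$), and a descending induction over the poset $\{p\in\Gamma_\mu:p\leq d\}$ ordered componentwise, one obtains that
$$
\bigoplus_{p\lneq d}A_*^{G_d}\big(\hat{W}_p\times^{P_{p,q}}G_d\big)\longrightarrow A_*^{G_d}\big(\hat{R}_d^{\circ}\setminus\hat{R}_d^{\st}\big)
$$
is surjective. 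Hence $\ker\big(\AA_d^{\sst}\to\hat{\AA}_d^{\st}\big)=\sum_{p\lneq d}\im\big(A_*^{G_d}(\hat{W}_p\times^{P_{p,q}}G_d)\to\AA_d^{\sst}\big)$.

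It remains to identify each summand. Forgetting the off-diagonal block exhibits $\hat{W}_p$ as an $L_{p,q}$-equivariant vector bundle over $\big(R_p^{\sst}\oplus F_p(n)\big)\times R_q^{\sst}$; with the affineness of $P_{p,q}/L_{p,q}$ and homotopy invariance this gives $A^*_{P_{p,q}}(\hat{W}_p)\cong A^*_{G_p\times G_q}(R_p^{\sst}\times R_q^{\sst})$, which by Totaro's argument (already used in the proof of Lemma~\ref{kack_lemma}) is spanned by external products $\alpha\boxtimes\beta$ with $\alpha\in\AA_p^{\sst}$ and $\beta\in\AA_q^{\sst}$. The correspondence $\hat{W}_p\times^{P_{p,q}}G_d\to\hat{R}_d^{\circ}$ coincides with the one defining the semi-stable ChowHa product $\AA_p^{\sst}\otimes\AA_q^{\sst}\to\AA_d^{\sst}$, except that the framing carried by $\hat{W}_p$ fills only the sub-bundle $\fpo$ of $F_d(n)$; the complementary directions form the $G_q$-equivariant quotient bundle $\bigoplus_i\Hom(\C^{n_i},\C^{q_i})$, pulled back from $R_q^{\sst}$, whose equivariant Euler class is $e_q^n$. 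Since push-forward along a sub-bundle inclusion is multiplication by the Euler class of the quotient bundle, the image of $\alpha\boxtimes\beta$ in $\AA_d^{\sst}$ is $\alpha*(e_q^n\cup\beta)$. Therefore $\im\big(A_*^{G_d}(\hat{W}_p\times^{P_{p,q}}G_d)\to\AA_d^{\sst}\big)=\AA_p^{\sst}*(e_q^n\cup\AA_q^{\sst})$, and summing over $p$ --- equivalently, over all pairs $p,q$ with $\slope(p)=\slope(q)=\mu$ and $q\neq 0$ --- gives exactly the kernel claimed in the theorem.
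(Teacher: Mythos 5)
Your proposal follows the same route as the paper's proof: reduce by homotopy invariance and flatness to the open inclusion $\hat{R}_d^{\st} \sub R_d^{\sst}\times F_d(n)$, stratify the complement by the dimension vector $p$ of the minimal slope-$\mu$ subrepresentation $L(M,f)$ containing $\im f$, run the descending induction of Lemma \ref{kack_lemma} over this stratification, and identify the push-forward from each stratum closure with $\alpha * (e_q^n \cup \beta)$ via the Euler class of the complementary framing bundle $\bigoplus_i \Hom(\C^{n_i},\C^{q_i})$. Your verification that the minimal such subrepresentation exists (closure of the family under intersection via the seesaw property) is a point the paper leaves implicit, and your identification of the push-forward as multiplication by $e_q^n$ matches the paper, which phrases it as a base extension of the zero section of $R_q^{\sst}\times F_q \to R_q^{\sst}$.

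There is, however, one genuine gap: you \emph{invoke} rather than prove the statement that the closure of each framed HN stratum over $R_d^{\sst}$ is the union of the smaller strata, appealing to the assertion in the introduction. That assertion is merely an advertisement of the content of this very proof, so the appeal is circular---and this closure statement is precisely the geometric input that distinguishes Theorem \ref{main_thm} from Lemma \ref{kack_lemma}, whose analogous hypothesis fails for general quivers. Concretely, one must show that $Z_p = \{(M,f) \mid M \text{ semi-stable},\ \dimvect L(M,f) \leq p\}$ is closed in $R_d^{\sst}\times F_d$ and that the stratum $\dimvect L(M,f) = p$ is dense in it; neither is obvious, since a priori the minimal destabilizing subrepresentation could behave badly under specialization. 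The paper's argument introduces the incidence variety $Y_p \sub R_d^{\sst}\times F_d\times \Gr_p^d$ of triples $(M,f,U)$ with $U$ a subrepresentation of $M$ containing $\im f$: properness of $\pi_{12}$ gives closedness of $Z_p = \pi_{12}(Y_p)$, and irreducibility of $Y_p$ (it is an equivariant fibration over the homogeneous space $\Gr_p^d$ with irreducible fibers $\uppertripq^{\sst}\times\fpo$) gives irreducibility of $Z_p$ and hence density of the open stratum. Without some such argument the exact sequence from \cite[Example 1.8.1]{Fulton:98} cannot be applied to the sets $Z_p$, and your induction has nothing to run on; supplying it would complete your proof and make it essentially identical to the paper's.
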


	\begin{proof}
		Fix a dimension vector $d$. The map $\phi: \hat{R}_d^{\st} \to R_d^{\sst}$ is the composition of the open embedding $\smash{\hat{R}_d^{\st}} \into R_d^{\sst} \times F_d$ and the (equivariant) vector bundle $R_d^{\sst} \times F_d \to R_d^{\sst}$. Therefore, $\phi$ is flat and the induced map in equivariant intersection theory is surjective. As a vector bundle induces an isomorphism in intersection theory, it suffices to compute a presentation of 
		$$
			A_m^G(R_d^{\sst} \times F_d) \to A_m^G(\hat{R}_d^{\st}).
		$$
		The unstable locus of $\hat{R}_d$ consists of those $(M,f)$ such that $M$ is either unstable or $M$ is semi-stable but there exists a proper subrepresentation $M'$ of $M$ of the same slope which contains $\im f$. Denote the subset of pairs $(M,f)$ satisfying the latter with $\hat{R}_d^x$. The sequence
		$$
			A_m^G(\hat{R}_d^x) \to A_m^G(R_d^{\sst} \times F_d) \to A_m^G(\hat{R}_d^{\st}) \to 0
		$$
		is exact. For a framed representation $(M,f) \in \hat{R}_d^x$, define $L(M,f)$ to be the subrepresentation of $M$ which is minimal among those containing the image of $f$ and having the slope $\mu$. Let $p = \dimvect L(M,f)$. As $U := L(M,f)$ is a proper subrepresentation of $M$, we get that $\hat{\slope}(\hat{p}) > \hat{\slope}(\hat{d})$ (using \cite[Lemma 3.2]{ER:09}). Moreover $(U,f)$ is framed stable, as $U$ is semi-stable and by the defining minimality condition. As $M/U$ is also semi-stable, say of dimension vector $q$, the HN-filtration of $(M,f)$ is
		$$
			(M,f) \supset (U,f) \supset 0
		$$
		whose type is $(\hat{p},q)$. The HN stratum of this type is thus the set of all $(M,f)$ with $M$ semi-stable and $\dimvect L(M,f) = p$. We show that the closure of the stratum $\hat{R}_{(\hat{p},q)}^{\HN}$ in $R_d^{\sst} \times F_d$ is
		$$
			Z_p := \{ (M,f) \mid M \text{ semi-stable, } \dimvect L(M,f) \leq p \}.
		$$
		Denote $\Gr_p^d = \prod_i \Gr_{p_i}^{d_i}$ and consider the morphisms
		$$
			\Gr_p^d \xot{}{\pi_3} R_d^{\sst} \times F_d \times \Gr_p^d \xto{}{\pi_{12}} R_d^{\sst} \times F_d.
		$$
		Let $Y_p$ be the closed subset of all $(M,f,U) \in R_d^{\sst} \times F_d \times \Gr_p^d$ such that $U$ determines a subrepresentation of $M$ which contains the image of $f$. Then $Z_p$ equals $\pi_{12}(Y_p)$ whence it is closed as $\pi_{12}$ is proper. Now, consider the restriction of $\pi_3$ to $Y_p$. The fiber of $U_0$ which is given by the subspaces $\C^{p_i} \sub \C^{d_i}$ generated by the first $p_i$ coordinate vectors is
		$$
			(Y_p)_{U_0} = \uppertripq^{\sst} \times \fpo
		$$
		and therefore irreducible. As $Y_p \to \Gr_p^d$ is an equivariant morphism to a homogeneous space such that one fiber is (and hence all fibers are) irreducible, $Y_p$ must itself be irreducible. We have deduced that $Z_p$ is closed and irreducible. As
		$$
			\hat{R}_{(\hat{p},q)}^{\HN} = Z_p - \bigcup_{\substack{p' < p \\ \slope(p') = \mu}} Z_{p'},
		$$
		$Z_p$ is indeed the closure of the HN stratum in $R_d^{\sst} \times F_d$ and coincides with the union of HN strata $\bigcup_{p'} \smash{\hat{R}_{(\hat{p},q)}^{\HN}}$ over all $p' \leq p$ with $\slope(p') = \mu$. Arguing in the same way as in the proof of Lemma \ref{kack_lemma}, we obtain the exactness of the sequence
		$$
			\bigoplus_{\substack{q \neq 0 \\ \slope(q) = \mu}} A_n^G \bigg( \Big( \uppertripq^{\sst} \times \fpo \Big) \times^{P_{p,q}} G \bigg) \to A_n^G(R_d^{\sst} \times F_d) \to A_n^G(\hat{R}_d^{\st}) \to 0.
		$$
		In this context, $G = G_d$ and $P_{p,q} = \big( \begin{smallmatrix} G_p & * \\ & G_q \end{smallmatrix} \big)$. The left hand map of this sequence comes from the composition 
		$$
			\uppertripq^{\sst} \times \fpo \xto{}{s_0} \uppertripq^{\sst} \times F_d \to R_d^{\sst} \times F_d
		$$
		and $s_0$ is a base extension of the zero section of the $G_q$-equivariant vector bundle $R_q^{\sst} \times F_q \to R_q^{\sst}$. This $G_q$-linear vector bundle can be seen as
		$$
			\bigoplus_i V(q_i)^{\oplus n_i},
		$$
		with $V(q_i)$ being the trivial vector bundle of rank $q_i$ and $G_q$ acting by left multiplication of the $i$\textsuperscript{th} factor. Therefore, the push-forward $(s_0)_*$ is precisely the multiplication with the product $\prod_i \smash{c_{q_i}^{G_q}}(V(q_i))^{n_i}$ as asserted.
	\end{proof}

	For two framing data $n$ and $m$ satisfying $n \leq m$ (i.e.\ $n_i \leq m_i$ for every $i$), we choose $\C^{n_i}$ to be embedded into $\C^{m_i}$ as the coordinate space of the first $n_i$ unit vectors. The natural linear map $\hat{R}_d(Q,n) \to \hat{R}_d(Q,m)$ is then $G_d$-equivariant and respects stability (with a suitable choice of $\epsilon$). Thus, we get a map $A^k\big( \smash{\hat{M}^{\st}(Q,m)} \big) \to A^k\big( \smash{\hat{M}^{\st}(Q,n)} \big)$. We put
	$$
		\overleftarrow{\AA}_d^{\theta-\sst}(Q) = \bigoplus_k\ \varprojlim_n A^k\big( \hat{M}^{\st}(Q,n) \big)
	$$
	for every $d \in \Gamma_\mu$. Let $\overleftarrow{\AA}^{\theta-\sst,\mu}(Q) = \bigoplus_{d \in \Gamma_\mu} \overleftarrow{\AA}_d^{\theta-\sst}(Q)$. From the above theorem, we obtain:

	\begin{cor}
		The natural map $\AA^{\theta-\sst,\mu}(Q) \to \overleftarrow{\AA}^{\theta-\sst,\mu}(Q)$ is an isomorphism.
	\end{cor}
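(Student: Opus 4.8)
The plan is to show that the inverse system $\{A^k(\hat M^{\st}_d(Q,n))\}_n$ stabilizes, so that the limit is reached at a finite stage, and that the stable value coincides with $\AA^{\theta-\sst}_d(Q)$. By Theorem \ref{main_thm}, for each $n$ we have a presentation
$$
	\AA^{\theta-\sst}_d(Q) \big/ \Big( \textstyle\sum_{p,q}\AA^{\sst}_p * (e_q^n \cup \AA^{\sst}_q) \Big)_d \;\xrightarrow{\ \cong\ }\; A^*\big(\hat M^{\st}_d(Q,n)\big),
$$
where the sum runs over $p+q=d$ with $\slope(p)=\slope(q)=\mu$ and $q\neq 0$, and the transition maps $A^*(\hat M^{\st}_d(Q,m))\to A^*(\hat M^{\st}_d(Q,n))$ for $n\leq m$ are, under this identification, the natural quotient maps (the larger ideal for $m$ surjects onto the smaller one for $n$, since $e_q^m$ is divisible by $e_q^n$ when $n\le m$ and the extra Chern-class factors live in positive degree). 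So $\varprojlim_n A^k(\hat M^{\st}_d(Q,n))$ is the limit of a tower of surjections of finite-dimensional vector spaces in each fixed degree $k$, and the natural map $\AA^{\theta-\sst}_d(Q)\to \overleftarrow{\AA}^{\theta-\sst}_d(Q)$ has kernel $\bigcap_n I_n^{(d)}$, where $I_n^{(d)}$ is the degree-$d$ part of the kernel ideal for framing $n$.

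The key step is therefore to prove that $\bigcap_n I_n^{(d)} = 0$ in each cohomological degree. Fix $d$ and a degree $k$; then $\AA^{\sst}_{q}$ in degrees bounded by $k$ is finite-dimensional, and the equivariant Chern class $e_q^n$ is homogeneous of degree $2\sum_i q_i n_i$ (in the Chow grading, $\sum_i q_i n_i$), which tends to $\infty$ with $n$ as soon as $q\neq 0$. Since any element $q$ appearing in the sum is nonzero, for $n$ large enough every summand $\AA^{\sst}_p * (e_q^n\cup\AA^{\sst}_q)$ is concentrated in degrees $> k$: indeed $\AA^{\sst}_p$ and $\AA^{\sst}_q$ sit in non-negative degrees, $*$ adds the shift $-2\chi(p,q)$ which is bounded below over the finitely many decompositions $p+q=d$, and $e_q^n\cup(-)$ raises degree without bound. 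Hence in degree $k$ the ideal $I_n^{(d)}$ vanishes for $n\gg 0$, so $\bigcap_n I_n^{(d)}=0$ and simultaneously the tower of surjections $A^k(\hat M^{\st}_d(Q,m))\to A^k(\hat M^{\st}_d(Q,n))$ consists of isomorphisms for $n,m\gg 0$. This gives both injectivity and surjectivity of $\AA^{\theta-\sst}_d(Q)\to\overleftarrow{\AA}^{\theta-\sst}_d(Q)$ in degree $k$, and assembling over all $k$ and all $d\in\Gamma_\mu$ yields the claimed isomorphism of $\Gamma_\mu$-graded groups (compatibility with the $\AA^{\sst,\mu}$-module structure is immediate, since all the maps in sight are the natural ones coming from the Hecke correspondences).

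The main obstacle I anticipate is the bookkeeping needed to make ``degree goes to infinity'' precise and uniform: one must check that the finitely many decompositions $p+q=d$ of slope $\mu$ contribute only finitely many shift values $-2\chi(p,q)$, that $\AA^{\sst}_p$ and $\AA^{\sst}_q$ are bounded below in degree (true, as they are quotients of polynomial rings in non-negatively graded generators), and — most delicately — that the identification in Theorem \ref{main_thm} is genuinely \emph{compatible with the transition maps} of the inverse system, i.e.\ that pulling back along $\hat R_d(Q,n)\hookrightarrow\hat R_d(Q,m)$ corresponds on the algebraic side to the obvious projection of quotients. This last point requires tracing through how the inclusion $\C^{n_i}\hookrightarrow\C^{m_i}$ interacts with the zero-section description of $(s_0)_*$ in the proof of Theorem \ref{main_thm}, but it is a formal compatibility of Chern classes of subbundles and should not present real difficulty.
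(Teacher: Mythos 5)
Your argument is correct and is exactly the (implicit) argument the paper intends: the corollary is stated as an immediate consequence of Theorem \ref{main_thm}, and the point is precisely that the kernel ideals are generated by classes involving $e_q^n$, whose degree $\sum_i q_i n_i$ tends to infinity for $q\neq 0$, so that in each fixed Chow degree the system stabilizes to $\AA^{\theta-\sst}_d(Q)$. (Only a cosmetic slip: for $n\le m$ the ideal $I_m$ is \emph{contained} in $I_n$, which is what makes the transition map the natural quotient map; your parenthetical states the inclusion backwards, but the divisibility of $e_q^m$ by $e_q^n$ that you invoke gives the correct containment.)
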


	Note that the projective limits  $\overleftarrow{\AA}^{\sst,\mu}$ and $\overleftarrow{\HH}^{\sst,\mu}$ coincide, provided that the quiver is acyclic. This follows from a result due to King and Walter (cf.\ \cite[Theorem 3]{KW:95}), which asserts that the cycle map for a quiver moduli is an isomorphism if the quiver is acyclic and if stability and semi-stability coincide. Both conditions are satisfied for smooth models if the original quiver $Q$ itself is acyclic.
	
	\section{Application to Non-Commutative Hilbert Schemes}

	For a quiver $Q$, a framing datum $n$, and a dimension vector $d$, the smooth model $\hat{M}_d^{\st}$ attached to the stability condition $\theta = 0$ (and the slope $\mu = 0$) is called the \textbf{non-commutative Hilbert scheme} and is commonly denoted $\Hilb_d = \Hilb_{d,n}(Q)$. It arises as the geometric quotient $\smash{\hat{R}_d^{\st}/G_d}$ and by Proposition \ref{engel-reineke}, a point of $\smash{\hat{R}_d}$ is a pair $(M,f)$ consisting of a representation $M$ of $Q$ and linear maps $f_i: \C^{n_i} \to \C^{d_i}$ such that no proper subrepresentation of $M$ contains the image of $f$. Proposition 7.8 of \cite{ER:09} shows that $\Hilb_d$ possesses a cell decomposition, i.e.\ it has a filtration by closed subsets such that the successive complements are isomorphic to affine spaces. A formal consequence of the existence of a cell decomposition is that the cycle map $A_*(\Hilb_d) \to H_*(\Hilb_d)$ is an isomorphism (cf.\ \cite[Example 19.1.11]{Fulton:98}). It hence does not matter if we consider the intersection theoretic or 
	the cohomological version of the CoHa-module which is induced by the non-commutative Hilbert schemes. In this context, Theorem \ref{main_thm} which establishes a connection between the CoHa of $Q$ and the module $\hat{\HH}(Q,n) = \hat{\HH}^{0-\st}(Q,n)$ reads as follows:

	\begin{cor} \label{cyclicHilb}
		The homomorphism $\HH(Q) \to \hat{\HH}(Q,n)$ is surjective and the kernel equals the sum
		$$
			\sum_{p,q \geq 0,\ q \neq 0} \HH_p * (e_q^n \cup \HH_q).
		$$
	\end{cor}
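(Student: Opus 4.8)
The plan is to deduce Corollary \ref{cyclicHilb} from Theorem \ref{main_thm} by specializing the stability condition to $\theta = 0$ and the slope to $\mu = 0$, and then transporting the statement from the ChowHa to the CoHa via the isomorphisms established earlier. Concretely, for $\theta = 0$ every representation is semi-stable of slope $0$, so $R_d^{0-\sst} = R_d$, the monoid $\Gamma_0$ is all of $\Gamma$, and hence $\AA^{0-\sst,0}(Q) = \AA(Q)$ and $\AA_p^{\sst} = \AA_p$. Likewise, the framed stability $\hat\theta_\epsilon$ with base slope $0$ produces the non-commutative Hilbert scheme $\Hilb_{d,n}(Q)$ (as recalled at the start of Section 4), so $\hat{\AA}^{0-\st,0}(Q,n)$ is exactly the ChowHa-module built from the $\Hilb_{d,n}(Q)$. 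Theorem \ref{main_thm} then says verbatim that $\AA(Q) \to \hat{\AA}(Q,n)$ is surjective with kernel $\sum_{p,q \geq 0,\ q \neq 0} \AA_p * (e_q^n \cup \AA_q)$, where now the slope conditions $\slope(p) = \slope(q) = 0$ are vacuous and may be dropped.

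The second step is to pass from Chow groups to cohomology. I would invoke the fact, recorded in Section 4, that $\Hilb_{d,n}(Q)$ admits a cell decomposition (\cite[Prop.\ 7.8]{ER:09}), so the cycle map $A_*(\Hilb_d) \to H_*(\Hilb_d)$ is an isomorphism by \cite[Example 19.1.11]{Fulton:98}; combined with the cycle-map isomorphism $\AA(Q) \cong \HH(Q)$ for the (Chow)Ha itself (Section 1), one gets a commutative square identifying $\AA(Q) \to \hat{\AA}(Q,n)$ with $\HH(Q) \to \hat{\HH}(Q,n)$. One must check that this identification is compatible with the CoHa-module structures — i.e.\ that the cycle maps intertwine the Hecke correspondences — which is exactly the compatibility already asserted when the ChowHa/CoHa and their modules were constructed (the Hecke correspondences consist of flat pullbacks, closed-embedding pushforwards, and induced-space pushforwards, all of which commute with cycle maps). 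Finally, one notes that the equivariant Chern-class element $e_q^n \in \AA_q$ maps to the corresponding cohomological equivariant Chern class in $\HH_q$ under the cycle map, and that the intersection product $\cup$ corresponds to the cup product; hence $\AA_p * (e_q^n \cup \AA_q)$ corresponds to $\HH_p * (e_q^n \cup \HH_q)$. Transporting the kernel description across the isomorphism of modules yields the claimed presentation.

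The main obstacle — which is really more of a bookkeeping point than a genuine difficulty — is the naturality/compatibility of the cycle-map identifications with \emph{all} the structure in play: not just the ring structures on $\HH(Q)$ and $\AA(Q)$, but the module structures, the connecting maps $\HH(Q) \to \hat{\HH}(Q,n)$, and the specific elements $e_q^n$. Since Theorem \ref{main_thm} is proved in the ChowHa, one has to be sure that no information is lost in the translation; the cell decomposition of $\Hilb_d$ is what makes this clean, because it forces the cycle map to be an isomorphism on the module side (the ChowHa/CoHa side being handled already in Section 1), and the functoriality of cycle maps with respect to flat pullback and proper pushforward handles the Hecke correspondences. I would therefore write the proof as: (i) observe $\theta = 0$, $\mu = 0$ reduces Theorem \ref{main_thm} to the stated form with all slope constraints trivial; (ii) invoke the cell decomposition and \cite[Example 19.1.11]{Fulton:98} plus the Section 1 isomorphism $\AA \cong \HH$ to identify the map of ChowHa-modules with the map of CoHa-modules; (iii) conclude.
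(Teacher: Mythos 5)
Your proposal is correct and matches the paper's own reasoning: the paper obtains Corollary \ref{cyclicHilb} exactly by specializing Theorem \ref{main_thm} to $\theta=0$, $\mu=0$ and using the cell decomposition of $\Hilb_{d,n}(Q)$ from \cite[Proposition 7.8]{ER:09} together with \cite[Example 19.1.11]{Fulton:98} to identify the intersection-theoretic and cohomological versions of the module. Your additional remarks on the compatibility of the cycle maps with the Hecke correspondences are the same (implicit) bookkeeping the paper relies on.
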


	\subsection*{Examples}

	This result has been worked out in the case of the $r$-loop quiver in \cite[Theorem 3.6]{Franzen:13:NCHilb_Loop}. The cases $r=0$ and $r=1$ can be displayed using the identification as an exterior algebra and a symmetric algebra, respectively:

	\begin{ex}
		\begin{enumerate}
			\item Identifying $\HH(\ptquiv) \cong \bigwedge(\psi_0,\psi_1,\ldots)$, the CoHa-module $\hat{\HH}(\ptquiv,n)$ is isomorphic to the exterior algebra $\bigwedge(\psi_0,\psi_1,\ldots,\psi_{n-1})$. This has also been worked out in \cite[Proposition 3.3]{Xiao:14}.
			\item The same argument applies for the Jordan quiver $\loopquiv$. Under the isomorphism $\HH(\loopquiv) \cong \Q[\psi_0,\psi_1,\ldots]$, the CoHa-module $\smash{\hat{\HH}(\loopquiv,n)}$ corresponds to $\Q[\psi_0,\psi_1,\ldots,\psi_{n-1}]$.
		\end{enumerate}
	\end{ex}

	\begin{ex}
		Let $Q$ be the quiver $\tildeAOne$. We fix a framing datum $r \rla s$. For a dimension vector $m \rla n$, a framed representation is a quadruple $(A,B,C,D)$ consisting of $A \in M_{n \times m}$, $B \in M_{m \times n}$, $C \in M_{m \times r}$, and $D \in M_{n \times s}$. Such a representation is $\hat{0}$-stable if for all linear subspaces $U \sub \C^m$ and $V \sub \C^n$ such that
		\begin{align}
			AU &\sub V, \\
			BV &\sub U, \\
			\im C &\sub U, \text{ and}\\
			\im D &\sub V,
		\end{align}
		it follows that $U = \C^m$ and $V = \C^n$.
		Now, fix the stability condition $\theta = (1,-1)$. We have seen on page \pageref{secondEx} that $\theta$-semi-stable representations exist only in slopes $1$, $0$, and $-1$ and that the semi-stable CoHas (or ChowHas, they are isomorphic in this case) identify with
		$$
			\HH^{1,\sst}(\tildeAOne) \cong \HH(\ptquiv),\hphantom{ and } \HH^{0,\sst}(\tildeAOne) \cong \HH(\loopquiv), \text{ and } \HH^{-1,\sst}(\tildeAOne) \cong \HH(\ptquiv).
		$$
		Moreover, we have constructed an isomorphism of $(\Gamma \times \Z)$-graded super-commutative algebras $\Psi: \HH(\ptquiv) \otimes \HH(\loopquiv) \otimes \HH(\ptquiv) \to \HH(\tildeAOne)$. We want to compare the respective modules arising from non-commutative Hilbert schemes with respect to this isomorphism. Therefore, let's describe $\hat{\theta}$-stability in this particular case. According to Proposition \ref{engel-reineke}, a framed representation $(A,B,C,D)$ as above is $\hat{\theta}$-stable if and only if $(A,B)$ is $\theta$-semi-stable and for all subspaces $U \sub \C^m$ and $V \sub \C^n$ satisfying conditions (1) to (4), it follows that either $U = \C^m$ and $V = \C^n$ or
		$$
			\frac{\dim U - \dim V}{\dim U + \dim V} < \frac{m-n}{m+n}.
		$$
		For a dimension vector $d$ of slope $\slope(d) = 1$, i.e.\ $d = (m \rla 0)$, a $\hat{\theta}$-stable representation is nothing but a $m \times r$-matrix $C$ and thus, under the isomorphism $\HH^{1,\sst}(\tildeAOne) \cong \HH(\ptquiv)$, we have an identification
		$$
			\hat{\HH}^{1,\sst}(\tildeAOne,r \rla s) \cong \hat{\HH}(\ptquiv,r).
		$$
		In the same vein, we obtain $\hat{\HH}^{-1,\sst}(\tildeAOne,r \rla s) \cong \hat{\HH}(\ptquiv,s)$. Let $d = (m \rla m)$ be a dimension vector of slope $0$. We have seen that $\smash{R_{m \rla m}^{\sst}} = \Gl_m \times M_m$. A framed representation $(A,B,C,D)$ is therefore $\smash{\hat{\theta}}$-stable if and only if $A$ is invertible and for all subspaces $U,V \sub \C^m$, not both $m$-dimensional, satisfying (1) to (4), we have $\dim U < \dim V$. We see at once that mapping $(A,B,C,D) \mapsto (BA, (C | BD))$ gives an isomorphism $\smash{\hat{M}^{\theta}}(\tildeAOne,r \rla s) \cong \Hilb_{m,r+s}(\loopquiv)$ which is compatible with the isomorphism of moduli stacks constructed on page \pageref{secondEx2}. Therefore, we obtain an isomorphism
		$$
			\hat{\HH}^{0,\sst}(\tildeAOne,r \rla s) \cong \hat{\HH}(\loopquiv,r+s)
		$$
		which is compatible with $\HH^{0,\sst}(\tildeAOne) \cong \HH(\loopquiv)$.
	\end{ex}

	Let $\psi_i^+ \in \HH_{1 \rla 0}$, $\phi_i \in \HH_{1 \rla 1}$ and $\psi_i^- \in \HH_{0 \rla 1}$ be as in Corollary \ref{freeSuper}. We have seen that $\HH(\tildeAOne)$ is the free super-commutative algebra generated by these elements. Combining Corollaries \ref{freeSuper} and \ref{cyclicHilb}, we obtain

	\begin{cor}
		As a graded vector space, $\hat{\HH}(\tildeAOne, r \rla s)$ is isomorphic to the free super-commutative algebra over the vector space spanned by
		$$
			\psi_0^+,\psi_1^+,\ldots,\psi_{r-1}^+, \phi_0,\phi_1,\ldots,\phi_{r+s-1}, \psi_0^-,\psi_1^-,\ldots,\psi_{s-1}^-.
		$$
	\end{cor}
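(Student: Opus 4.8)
The plan is to read the statement off from Corollary~\ref{freeSuper} and Corollary~\ref{cyclicHilb}. By Corollary~\ref{cyclicHilb}, $\hat{\HH}(\tildeAOne,r\rla s)$ is the quotient of $\HH(\tildeAOne)$ by the graded submodule
$$
K \;=\; \sum_{\substack{p,q\ge 0\\ q\ne 0}}\ \HH_p * \big(e_q^{r\rla s}\cup\HH_q\big),
$$
and for $q=a\rla b$ the equivariant top Chern classes entering $e_q^{r\rla s}$ are the top elementary symmetric polynomials, so $e_q^{r\rla s}=(x_1\cdots x_a)^r(y_1\cdots y_b)^s\in\HH_{a\rla b}$. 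By Corollary~\ref{freeSuper}, $\HH(\tildeAOne)$ is the free super-commutative algebra on the odd generators $\psi_i^+$ ($i\ge 0$), the even generators $\phi_i$ ($i\ge 0$) and the odd generators $\psi_i^-$ ($i\ge 0$); being super-commutative, the two-sided ideal $J$ it generates by $\{\psi_i^+:i\ge r\}\cup\{\phi_i:i\ge r+s\}\cup\{\psi_i^-:i\ge s\}$ has quotient $\HH(\tildeAOne)/J$ equal to the free super-commutative algebra on $\psi_0^+,\dots,\psi_{r-1}^+,\phi_0,\dots,\phi_{r+s-1},\psi_0^-,\dots,\psi_{s-1}^-$. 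Thus it suffices to prove $K=J$.

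The inclusion $J\subseteq K$ is the elementary direction and is checked on generators. Taking $q=1\rla 0$, where $e_q^{r\rla s}=x^r$, gives $\psi_i^+=x^i=x^r\cup x^{i-r}\in e_q^{r\rla s}\cup\HH_{1\rla 0}\subseteq K$ for $i\ge r$, and symmetrically $\psi_i^-\in K$ for $i\ge s$. For $\phi_i$ with $i\ge r+s$ (assume $s\ge 1$; if $s=0$, or symmetrically $r=0$, then $e_{1\rla 1}^{r\rla s}=x^r$ and $\phi_i=x^i\in x^r\cup\HH_{1\rla 1}\subseteq K$ at once): using $q=1\rla 0$ with $p=0\rla 1$ and the multiplication formula of Example~\ref{exC_first}, $\psi_j^- * x^k=x^{k+1}y^j-x^k y^{j+1}\in K$ for all $k\ge r$ and $j\ge 0$; summing these over $j=0,\dots,s-1$ with exponents $k=r+s-1+c,\dots,r+c$ telescopes to $x^{r+s+c}-x^{r+c}y^s\in K$, and since $x^{r+c}y^s\in x^r y^s\cup\HH_{1\rla 1}=e_{1\rla 1}^{r\rla s}\cup\HH_{1\rla 1}\subseteq K$ we get $\phi_{r+s+c}=x^{r+s+c}\in K$. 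Hence $J\subseteq K$, so there is a surjection of $\Gamma\times\Z$-graded vector spaces from the free super-commutative algebra on $\psi_0^+,\dots,\psi_{r-1}^+,\phi_0,\dots,\phi_{r+s-1},\psi_0^-,\dots,\psi_{s-1}^-$ onto $\hat{\HH}(\tildeAOne,r\rla s)$.

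The remaining point --- that this surjection is an isomorphism, equivalently $K\subseteq J$ --- is the hard step. I would settle it by comparing bigraded dimensions. Here $\hat{\HH}(\tildeAOne,r\rla s)=\bigoplus_d H^*(\Hilb_d(\tildeAOne))$, and the non-commutative Hilbert schemes $\Hilb_{m\rla n}(\tildeAOne)$ with framing $r\rla s$ carry a cell decomposition (\cite[Corollary~7.8]{ER:09}, \cite[Theorem~1.3]{Reineke:05}), so their Betti numbers are combinatorial; a Harder--Narasimhan/Hall-algebra computation for the framed quiver, in the spirit of Theorem~\ref{reineke}, then evaluates the generating series $\sum_{m,n}P\big(\Hilb_{m\rla n}(\tildeAOne);q\big)\,x^m y^n$, and after normalising the cohomological grading as on the CoHa (absorbing the Euler-form shifts) this equals $\prod_{i=0}^{r-1}(1+q^{i+1/2}x)\cdot\prod_{i=0}^{r+s-1}(1-q^{i}xy)^{-1}\cdot\prod_{i=0}^{s-1}(1+q^{i+1/2}y)$, which is exactly the bigraded Poincar\'e series of the free super-commutative algebra on $\psi_0^+,\dots,\psi_{r-1}^+,\phi_0,\dots,\phi_{r+s-1},\psi_0^-,\dots,\psi_{s-1}^-$. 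Combined with the surjection above this forces an isomorphism.

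The main obstacle is precisely this last Hilbert-series comparison (the combinatorics of the cells plus the grading bookkeeping). An alternative that bypasses it is to prove $K\subseteq J$ directly: since $J$ is a $*$-ideal it suffices that $e_q^{r\rla s}\cup\HH_q\subseteq J$ for every $q=a\rla b\ne 0$, i.e.\ that every $S_a\times S_b$-symmetric polynomial divisible by $(x_1\cdots x_a)^r(y_1\cdots y_b)^s$ lies in $J$; using $s_\lambda(x_1,\dots,x_a)=\psi^+_{k_1}*\cdots*\psi^+_{k_a}$ with $k_1=\lambda_a$, divisibility forces $k_1\ge r$, so this is immediate for $b=0$ and dually for $a=0$, while the mixed case $a,b\ge 1$ needs such a polynomial expressed through the free-algebra basis of $\HH_{a\rla b}$ together with the telescoping identity of the second paragraph --- bookkeeping comparable in difficulty to the Hilbert-series route.
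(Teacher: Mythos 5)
Your reduction to the equality $K=J$ and your proof of the inclusion $J\subseteq K$ are correct and essentially identical to the paper's computation: the identities $e_{1\rla 0}^{r\rla s}\cup\psi_i^+=\psi_{i+r}^+$ and $e_{0\rla 1}^{r\rla s}\cup\psi_i^-=\psi_{i+s}^-$, together with your telescoping sum of the elements $\psi_j^-*\psi_k^+$ with $k\ge r$, are exactly the paper's chain of congruences $e_{1\rla 1}^{r\rla s}\cup\phi_c=x^{c+r}y^s\equiv x^{c+r+1}y^{s-1}\equiv\cdots\equiv x^{c+r+s}=\phi_{c+r+s}$. The genuine gap is the reverse inclusion $K\subseteq J$, which you correctly identify as the hard step but do not prove. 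The Poincar\'e-series identity for the non-commutative Hilbert schemes $\Hilb_{m\rla n}(\tildeAOne)$ that you write down is asserted rather than derived, and its product factorization is essentially equivalent to the corollary itself; so, as written, the argument assumes what is to be shown at precisely the point where the content lies. Your second route (expressing every element of $e_q^{r\rla s}\cup\HH_q$ through the free generators) is likewise only described, not executed.

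The paper closes this direction without any series computation. By Corollary \ref{freeSuper}, $\HH(\tildeAOne)$ is generated as an algebra by $\HH_{1\rla 0}\oplus\HH_{1\rla 1}\oplus\HH_{0\rla 1}$; consequently the kernel of Corollary \ref{cyclicHilb} is generated as an ideal by its components in these three degrees. Those components are then bounded directly: in degrees $1\rla 0$ and $0\rla 1$ they are spanned by $\psi_i^+$ ($i\ge r$) and $\psi_i^-$ ($i\ge s$), and in degree $1\rla 1$ they consist of $x^ry^s\,\Q[x,y]$ together with the products $\psi_k^+*\psi_l^-$ with $k\ge r$ or $l\ge s$, all of which lie in $J$ by the same relation $x^ky^{l+1}\equiv x^{k+1}y^l$ ($k\ge r$) that you already established. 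If you prefer your generating-series route, you must actually carry out the Harder--Narasimhan computation of the framed Betti numbers (e.g.\ via Engel--Reineke) and verify the claimed factorization, including the Euler-form normalization; without that, the surjection you construct has not been shown to be injective.
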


	\begin{proof}
		We show that the kernel, let's call it $\mathcal{I}$, described in Corollary \ref{cyclicHilb} is the ideal of $\HH(\tildeAOne)$ which is generated by $\psi_i^+$ ($i \geq r$), by $\phi_i$ ($i \geq r+s$) and by $\psi_i^-$ ($i \geq s$). It is clear that $\smash{e_{1 \rla 0}^{r \rla s}} \cup \psi_i^+ = \psi_{i+r}^+$ and $\smash{e_{0 \rla 1}^{r \rla s}} \cup \psi_i^- = \psi_{i+s}^-$. Now for the elements $\phi_i(x,y) = x^i \in \HH_{1 \rla 1} = \Q[x,y]$. We have
		$$
			e_{1 \rla 1}^{r \rla s} \cup \phi_i = x^{i+r}y^s.
		$$
		We compute
		$
			\psi_k^+ * \psi_l^- = x^ky^{l+1} - x^{k+1}y^l
		$
		which lies in $\mathcal{I}$ if $k \geq r$ (or $l \geq s$, of course, but we're not using this). Thus
		\begin{align*}
			e_{1 \rla 1}^{r \rla s} \cup \phi_i &= x^{i+r}y^s 
				\equiv x^{i+r+1}y^{s-1} \equiv \ldots 
				\equiv x^{i+r+s}
		\end{align*}
		where $f \equiv g$ means $f-g$ lies in $\mathcal{I}$. Conversely the ideal $\mathcal{I}$ is, by Corollary \ref{cyclicHilb}, generated by elements of $\HH_{1 \rla 0} \oplus \HH_{1 \rla 1} \oplus \HH_{0 \rla 1}$ as $\HH(\tildeAOne)$ is generated by these elements. But for degree reasons we see that $\mathcal{I}$ can't contain more than $(\psi_r^+,\psi_{r+1}^+,\ldots,\phi_{r+s},\phi_{r+s+1},\ldots,\psi_s^-,\psi_{s+1}^-,\ldots)$.
	\end{proof}

	\bibliographystyle{abbrv}
	\bibliography{Literature}
\end{document}